\def\Tr{\mathrm{Tr}}
                                              \def\Norm{\mathrm{Norm}}
                                                   \def\Tr{\mathrm{Tr}}
\def\fchar{\mathrm{char}}
                                                    \def\rr{\mathbf{r}}
\newtheorem{thm}{Theorem}[section]
\newtheorem{cor}[thm]{Corollary}
\newtheorem{prop}[thm]{Proposition}
\theoremstyle{definition}
\newtheorem{ex}[thm]{Example}
\newtheorem{sect}[thm]{}
           \newtheorem{rem}[thm]{Remark}
\newtheorem{rems}[thm]{Remarks}
\title{Families of  elliptic curves with rational torsion points of even order}
\author {Boris M. Bekker}
\address{St. Petersburg State University, Department of Mathematics and Mechanics,   Universitetsky prospekt, 28, Peterhof, St. Petersburg, 198504, Russia}
\email{ bekker.boris@gmail.com}
\author {Yuri G. Zarhin}
\address{Pennsylvania State University, Department of Mathematics, University Park, PA 16802, USA}
\email{zarhin@math.psu.edu}
\begin{document}

\maketitle

\section{Introduction}

This paper is a follow up of \cite{BZAA}. Our aim is to give an explicit construction of versal families of elliptic curves with a torsion point of a certain (small) order. The problem of constructing such families goes back to Beppo Levi \cite{SS,Silver} and is closely related to certain modular curves of genus zero. However, our approach based on the explicit formulas for ``halves'' of points on elliptic curves \cite[Sect. 2]{BZAA} is quite elementary.

Here are our main results.

\begin{thm}
\label{04}
Let $K$ be a field with $\fchar{K}\ne 2$. Let $E$ be an elliptic curve over $K$.
\begin{enumerate}
\item
Let $a$ be a nonzero element of $K$.
The following conditions are equivalent.
\begin{itemize}
\item[(4i)]
$E(K)$ contains exactly one point of order $2$ and a point of order $4$.
\item[(4ii)]
There exists a nonzero element $b \in K$ such that $a^2+4b$ is not a square in $K$ and $E$ is $K$-isomorphic to the elliptic curve
$$\mathcal{E}^{(4)}_{a,b}: y^2=\left(x^2+(a^2+2b)x+b^2\right)x.$$
\end{itemize}
\item
The following conditions are equivalent.
\begin{itemize}
\item[(8i)]
$E(K)$ contains exactly one point of order $2$ and a point of order $8$.
\item[(8ii)]
There exists $t \in K\setminus \{0,\pm 1\}$ such that $2t^2-1$ is not a square in $K$ and $E$ is $K$-isomorphic to the elliptic curve
$$\mathcal{E}^{(8)}_t:  y^2=\left(x^2+2\frac{t^4+2t^2-1}{ (t^2-1)^2}x+1\right)x.$$
\end{itemize}
\item
The following conditions are equivalent.
\begin{itemize}
\item[(6i)]
$E(K)$ contains exactly one point of order $2$ and a point of order $6$.
\item[(6ii)]
There exists $t \in K \setminus \{0,-4, 1/2\}$ such that $t^2+4t$ is not a square in $K$ and $E$ is $K$-isomorphic to the elliptic curve
$$\mathcal{E}^{(6)}_t: y^2=(x^2+(t^2+2t)x+t^2)(x+1).$$
\end{itemize}
\item
The following conditions are equivalent.
\begin{itemize}
\item[(12i)]
$E(K)$ contains exactly one point of order $2$ and a point of order $12$.
\item[(12ii)]
There exists  $t \in K\setminus \{0,\pm1,\pm\sqrt{-1}\}$ such that  $(t^2+1)(3t^2-1)$ is not a square in $K$, $3t^2+1\neq0$, and $E$ is $K$-isomorphic to the elliptic curve
$$\mathcal{E}^{(12)}_{t}: y^2=(x+1)\left(x^2+\frac{8t^2(t^2+1)(t^4+4t^2-1)}{(t^2-1)^4}x+\frac{16t^4(t^2+ 1)^2}{(t^2-1)^4}\right).$$
\end{itemize}
\item
The following conditions are equivalent.
\begin{itemize}
\item[(10i)]
$E(K)$ contains exactly one point of order $2$ and a point of order $10$.
\item[(10ii)]
There exists  $t \in K\setminus\{0,\pm1, (-1\pm\sqrt 5)/2, 2\pm\sqrt5\}$ such that  $t(t^2+t-1)$ is not a square in $K$ and $E$ is $K$-isomorphic  to the elliptic curve
$$\mathcal{E}^{(10)}_{t}: y^2=(x+1)\left(x^2+\dfrac{8t^2(t^3+t^2-t+1)}
 {(t-1)^2(t+1)^4}x+\frac{16t^4}{(t-1)^2(t+1)^4}\right).$$
\end{itemize}
\end{enumerate}
\end{thm}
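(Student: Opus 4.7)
The plan is to put $E$ in a normal form suited to its (unique) $K$-rational $2$-torsion point and then to iterate the halving formulas of \cite[Sect.~2]{BZAA}, augmenting them in the cases involving odd prime torsion (orders $6$, $10$, $12$) by a parametrization of curves with a rational $3$- or $5$-torsion point. In every case the direction (Nii)$\Rightarrow$(Ni) will be a direct verification: the excluded values of the parameter make the displayed Weierstrass model nonsingular, the non-square hypothesis pins down that the only rational root of the displayed cubic is $x=0$ (respectively $x=-1$), and an explicit point of the required order can be written down. The substance is in the converse.

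For part (1), I first translate the unique rational $2$-torsion point to $(0,0)$, writing $E: y^2 = x(x^2+Ax+B)$ with $A^2-4B$ not a square in $K$. The halving formulas then say that $(0,0)\in 2E(K)$ iff $B$ is a square; writing $B=b^2$, the two candidate halves have $x$-coordinates $\pm b$, and $(\pm b,y)$ is $K$-rational iff $A\pm 2b$ is a square. Absorbing the sign of $b$ I may assume $A-2b=a^2$, giving $A=a^2+2b$ and $A^2-4B=a^2(a^2+4b)$, which is exactly $\mathcal{E}^{(4)}_{a,b}$ with the stated non-square condition, and produces the order-$4$ point $(-b,ab)$. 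For part (2), I halve once more starting from $(-b,ab)$ on $\mathcal{E}^{(4)}_{a,b}$; the resulting rational relation between $a$ and $b$ cuts out a rational curve, which I uniformize by a single parameter $t$ to arrive at $\mathcal{E}^{(8)}_t$.

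Parts (3) and (5) exploit the splitting $E[2m]\cong E[2]\oplus E[m]$ for odd $m$, so that $E(K)$ has a point of order $2m$ iff it has independent rational points of orders $2$ and $m$. Superimposing the $2$-torsion normal form on a parametrization of curves with rational $m$-torsion -- a Hessian-type family for $m=3$ and a Tate-style normal form for $m=5$ -- and eliminating to one free parameter $t$ produces $\mathcal{E}^{(6)}_t$ and $\mathcal{E}^{(10)}_t$. Part (4) is then obtained by combining the order-$4$ analysis of part (1) with the $3$-torsion parametrization used in part (3), or equivalently by halving in the order-$6$ family.

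The main obstacle I expect is not conceptual but bookkeeping. In each higher-order case, one must choose the uniformizer $t$ on the appropriate (genus-zero) modular curve $X_1(N)$ so that the excluded values of $t$ displayed in the theorem correspond \emph{exactly} to the degeneracies -- singular Weierstrass equation, failure of the non-square condition, or collapse of the alleged torsion point to one of order dividing $N$ properly -- and so that the resulting normal form exhausts every $K$-isomorphism class in (Ni). Carrying this out for each $N\in\{8,6,12,10\}$ with no missing or spurious cases, and keeping the rational expressions in $t$ in the clean form displayed in the statement, is where the real work goes.
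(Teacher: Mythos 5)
Your plan is essentially sound and, for orders $4$, $8$ and $12$, it follows the same road as the paper: normalize so that the unique rational $2$-torsion point is $(0,0)$ (resp.\ $(-1,0)$), apply the explicit halving formulas, and rationally parametrize the conic conditions that come out (for order $12$ your alternative ``halve the $2$-torsion point inside the order-$6$ family'' is exactly what the paper does). Where you genuinely diverge is in the odd-torsion ingredient for orders $6$ and $10$: you propose to import an external parametrization of curves with a rational $3$- or $5$-torsion point (Hessian / Tate normal form, i.e.\ $X_1(3)$, $X_1(5)$) and then superimpose the $2$-torsion normal form. The paper instead extracts the odd-torsion conditions from the \emph{same} division-by-two formulas: a point $P$ of order $3$ satisfies $P=2(-P)$, so applying the explicit half-formulas to $P=(0,y_0)$ and demanding that one of the two halves equal $-P$ pins down the Weierstrass coefficients directly; order $5$ is encoded as the pair of conditions $2Q=P$, $2P=-Q$ and solved with the explicit doubling/halving expressions. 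This keeps the whole argument self-contained (no second normal form to reconcile), makes the validity in characteristics $3$ and $5$ transparent, and produces the displayed excluded parameter values as a by-product; your route is standard and will work, but the reconciliation of two normal forms and the re-derivation of the degeneracy locus is exactly the bookkeeping you correctly identify as the hard part, and it is heavier on your path than on the paper's.

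Two small corrections. First, in part (1) the clause ``$(0,0)\in 2E(K)$ iff $B$ is a square'' is false as an ``iff'': $B=b^2$ is only necessary, and the correct criterion is the conjunction you state next ($B=b^2$ \emph{and} $A\pm 2b$ a square for one choice of sign); as written the sentence contradicts your own next clause. Second, in the theorem as stated in the introduction the element $a$ is \emph{fixed in advance}, so after producing some pair $(a_0,b_0)$ you still need the rescaling $(x,y)\mapsto(x/u^2,y/u^3)$ with $u=a_0/a$, which replaces $b_0$ by $b=b_0(a/a_0)^2$ and preserves the non-square condition since $a_0^2+4b_0$ and $a^2+4b$ differ by the square factor $(a/a_0)^2$; this step (the paper's remark that $\mathcal{E}^{(4)}_{a,b}\cong\mathcal{E}^{(4)}_{1,b/a^2}$) is absent from your outline.
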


\begin{rems}
\begin{enumerate}
\item
$\mathcal{E}^{(4)}_{a,b}(K)$ contains exactly two points of order $4$, namely,
$(-b,ab)$  and $(-b,-ab)$. On the other hand, dividing both sides of the equation for $\mathcal{E}^{(4)}_{a,b}$
by $a^6$ and introducing
$$\tilde{x}=\frac{x}{a^2},\ \tilde{y}=\frac{y}{a^3}, \ \tilde{b}=\frac{b}{a^2},$$
we obtain that $\mathcal{E}^{(4)}_{a,b}$ is $K$-isomorphic to
$$\mathcal{E}^{(4)}_{1,\tilde{b}}: \tilde{y}^2=\left(\tilde{x}^2+(1+2\tilde{b})\tilde{x}+\tilde{b}^2\right)\tilde{x}.$$

\item
$\mathcal{E}^{(8)}_t(K)$ contains exactly two points of order 4, namely,
$$\left(1, \frac{2t}{1-t^2}\right), \ \left(1, -\frac{2t}{1-t^2}\right)$$
and exactly four points of order 8, namely,
$$\left(\frac{1+t}{1-t},\frac{-2t}{(1-t)^2}\right), \ \left(\frac{1-t}{1+t},\frac{2t}{(1+t)^2}\right)$$
and their negatives
$$\left(\frac{1+t}{1-t},\frac{2t}{(1-t)^2}\right), \ \left(\frac{1-t}{1+t},\frac{-2t}{(1+t)^2}\right).$$
\item
$\mathcal{E}^{(6)}_t(K)$ contains the points $(0,t)$ and   $(0,-t)$ of order 3  and the points $(-2t, t-2t^2)$ and $(-2t, 2t^2-t)$ of order 6.
 If $\fchar =3$, then there are no other points of order $3$ or $6$ in $\mathcal{E}^{(6)}_t(K)$.
 \item
 $\mathcal{E}^{(12)}_t(K)$
 contains the points $$\left(0,\frac{4t^2(t^2+1)}{(t^2-1)^2}\right), \  \left(0,-\frac{4t^2(t^2+1)}{(t^2-1)^2}\right)$$   of order 3, and exactly two points of order 4, namely,
$$\left(-\frac{3t^2+1}{t^2-1}, -\frac{8 t^3(3t^2+1)}{(t^2-1)^3}\right), \ \left(-\frac{3t^2+1}{t^2-1}, \frac{8 t^3(3t^2+1)}{(t^2-1)^3}\right).$$
 \item
  $\mathcal{E}^{(10)}_t(K)$ contains    a point $\left(0,{4t^2}/{(t-1)(t+1)^2}\right)$ of order $5$
and exactly one point $(-1,0)$ of order $2$.
\end{enumerate}
\end{rems}

The paper is organized as follows.  First three sections deal with division by 2 in $E(K)$
under various assumptions about the existence of $K$-points of order 2 on the elliptic curve $E$. Our goal is to obtain explicit formulas that will be used in the next five sections  containing a construction of versal families of elliptic curves with rational points of order 4,8,  6, 12, and 10,  respectively (and with exactly one rational point of order 2). The last section deals with versal families of (ordinary) elliptic curves in characteristic 2 that admit a rational point of order 4 or 8.

\section{Review of \cite{BZAA}, section 2}
\label{BZ}
\begin{sect}
\label{intE}
Let $K$ be a field with $\fchar(K)\ne 2$ and  $\bar{K}$ an algebraic closure of $K$. Let $f(x) \in K[x]$ be a monic cubic polynomial without repeated roots and
$\{\alpha_1,\alpha_2,\alpha_3\} \subset \bar{K}$ the set of roots of $f(x)$. Clearly
$$f(x)=(x-\alpha_1) (x-\alpha_2) (x-\alpha_3) \in \bar{K}[x].$$
Let
$$E=E_f: y^2=f(x)$$
be an elliptic curve over $K$ and $\infty$ its only infinite point (the zero of the group law).
We have
$$\begin{aligned}E(K)=\{(x_0,y_0)\in K^2\mid y_0^2=f(x_0)\}  \coprod\{\infty\}\\
\subset \{(x_0,y_0)\in \bar{K}^2\mid y_0^2=f(x_0)\}\coprod\{\infty\} = E(\bar{K}).\end{aligned}$$
The points
$$W_i=(\alpha_i,0) \in  E(\bar{K}) \ (1 \le i \le 3)$$
are the only points of order 2 on $E$.
\end{sect}

\begin{rem}
\label{onePT}
A root $\alpha_i$ lies in $K$ if and only if $W_i=(\alpha_i,0) \in E(K)$.
This implies that $E(K)$ contains exactly one point of order 2 if and only if $f(x)$ has exactly one root in $K$.
\end{rem}

The following assertion is pretty well-known \cite{Cassels,Knapp} (see also \cite{BZAA}).

\begin{thm}
\label{divCLass}
Suppose that all the roots $\alpha_i$ of $f(x)$ lie in $K$, i.e., all three points of order $2$ on $E$ lie in $E(K)$.
Let $P=(x_0,y_0) \in  E(K)$. Then $P \in 2 E(K)$ if and only if all $x_0-\alpha_i$ are squares in $K$.
In addition, each point $Q\in E(\bar{K})$ with $2Q=P$ actually lies in $E(K)$.
\end{thm}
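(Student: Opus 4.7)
My plan is a complete $2$-descent via the Kummer-type map
\[
\delta\colon E(K)\longrightarrow (K^{*}/K^{*2})^3,\qquad
(x_0,y_0)\longmapsto(x_0-\alpha_1,\,x_0-\alpha_2,\,x_0-\alpha_3)\bmod K^{*2},
\]
extended to $E[2]$ (at $W_i$ the vanishing coordinate is replaced by $f'(\alpha_i)$) and to $\infty\mapsto(1,1,1)$. The theorem is equivalent to the identity $\ker\delta = 2E(K)$, and the explicit construction of a halving will deliver the "in addition" clause as a byproduct.

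The first step is to show $\delta$ is a homomorphism. For three collinear $K$-rational points $P_1+P_2+P_3=0$ lying on a secant $y=\ell x+m$, the polynomial identity $f(x)-(\ell x+m)^2 = (x-x_1)(x-x_2)(x-x_3)$ (using that $f$ is monic cubic), evaluated at $x=\alpha_j$ with $f(\alpha_j)=0$, yields $(\ell\alpha_j+m)^2 = (x_1-\alpha_j)(x_2-\alpha_j)(x_3-\alpha_j)$, so $\delta(P_1)\delta(P_2)\delta(P_3)=1$ in $(K^{*}/K^{*2})^3$. The cases involving $2$-torsion or $\infty$ follow from the standard extension convention. Consequently $\delta(2Q)=\delta(Q)^2=1$, giving the forward implication $2E(K)\subseteq\ker\delta$ and thus the "only if" direction of the equivalence.

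The substantive step is the converse: given $x_0-\alpha_j=\beta_j^2$ with $\beta_j\in K$, produce $Q=(u,v)\in E(K)$ with $2Q=P$. Geometrically, $Q$ is characterized by the property that its tangent line $y=\ell x+m$ meets $E$ again at $-P=(x_0,-y_0)$, so $f(x)-(\ell x+m)^2 = (x-u)^2(x-x_0)$. Substituting $x=\alpha_j$ forces $\ell\alpha_j+m=\epsilon_j(u-\alpha_j)\beta_j$ for some signs $\epsilon_j\in\{\pm1\}$. Viewing $\alpha_j\mapsto \epsilon_j(u-\alpha_j)\beta_j$ as Lagrange data at the three nodes $\alpha_j$, the demand that the interpolant be \emph{linear} in $\alpha$ is the vanishing of the quadratic Lagrange coefficient $\sum_j \epsilon_j(u-\alpha_j)\beta_j/f'(\alpha_j)$, a single linear equation in $u$ that is solvable over $K$. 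With $u$ so determined one obtains $\ell,m\in K$, sets $v=\ell u+m$, and verifies (for a consistent sign choice compatible with $-y_0=\ell x_0+m$) that $(u,v)\in E(K)$ and $2(u,v)=P$. Equivalently, one simply applies the explicit halving formulas derived in \cite[Sect.\ 2]{BZAA}.

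The "in addition" clause is then free: any two halvings of $P$ differ by an element of $E[2]\subset E(K)$, so once one $K$-rational halving $Q_0$ has been exhibited, every $Q\in E(\bar K)$ with $2Q=P$ is $K$-rational. The main obstacle I anticipate is the bookkeeping in the converse step—coordinating the sign choices $\epsilon_j$ with the consistency condition $-y_0=\ell x_0+m$ so as to produce a genuine $K$-rational halving rather than merely a formal solution of the linear system.
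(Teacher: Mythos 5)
The paper does not actually prove Theorem~\ref{divCLass}: it is stated as well known and referred to \cite{Cassels,Knapp,BZAA}, and the machinery that follows (Sect.~\ref{bzAA}, Theorem~\ref{div2S}, Corollary~\ref{Crit2}) is developed independently of it. Your argument is the standard complete $2$-descent and is correct in substance, but it is worth noting that the converse direction is an immediate corollary of the paper's own Corollary~\ref{Crit2}: writing $x_0-\alpha_i=\beta_i^2$ with $\beta_i\in K$ and taking $r_i=\beta_i$ gives $s_1,s_2\in K$, so condition (ii) there holds and $P\in 2E(K)$; likewise the forward direction follows from Theorem~\ref{div2S} together with formula \eqref{QtoR}, which exhibits each $r_i=\sqrt{x_0-\alpha_i}$ as a rational expression in $x_1,y_1,\alpha_1,\alpha_2,\alpha_3\in K$, hence an element of $K$. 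Your homomorphism argument for $\delta$ buys more (it is the map used in genuine descent computations) at the cost of the case analysis at $E[2]$ and $\infty$ that you wave at with ``the standard extension convention.'' The one genuine soft spot is your primary route to the converse: the Lagrange-interpolation condition $\sum_j \epsilon_j(u-\alpha_j)\beta_j/f'(\alpha_j)=0$ is linear in $u$, but you do not rule out that its leading coefficient $\sum_j\epsilon_j\beta_j/f'(\alpha_j)$ vanishes for the sign choice forced by $-y_0=\ell x_0+m$, nor do you carry out that sign coordination; as written this step is a plan, not a proof. Since you explicitly fall back on the explicit halving formulas of \cite[Sect.~2]{BZAA} (i.e., on \eqref{bzaaQ}), which settle the matter with no degeneracy issues, the gap is closable, but you should either complete the interpolation bookkeeping or simply lead with the formula-based argument.
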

In what follows we discuss the divisibility by 2 of points in $E(K)$ when not necessarily  all roots of $f(x)$ lie in $K$.

\begin{sect}
\label{bzAA}
Let $P=(x_0,y_0) \in  E(\bar{K})$. There are precisely four points
$Q \in  E(\bar{K})$ such that $2Q=P$. The following explicit construction of all {\sl halves} $Q$'s was described in \cite[Sect. 2]{BZAA}.
Let us choose square roots
\begin{equation}
\label{bzaaR}
r_i=\sqrt{x_0-\alpha_i}\in \bar{K} \  (1\le i \le 3).
\end{equation}
(There are exactly eight choices of such triples $(r_1,r_2,r_3)$.) We have
\begin{equation}
\label{signY}
r_1 r_2 r_3=\sqrt{(x_0-\alpha_1) (x_0-\alpha_2) (x_0-\alpha_3)}=\pm y_0.
\end{equation}
Since all $\alpha_i$'s are {\sl distinct}, $r_i \ne \pm r_j$ if $i \ne j$.  This implies that for all $i \ne j$
\begin{equation}
\label{ineqR}
r_i\pm r_j \ne 0.
\end{equation}
Let us consider only the triples that satisfy
\begin{equation}
\label{bzaaY}
r_1 r_2 r_3=-y_0.
\end{equation}
(There are exactly four choices of such triples $(r_1,r_2,r_3)$.)
Let us put
\begin{equation}
\label{SymR}
s_1=s_1(r_1,r_2,r_3)=r_1+r_2+r_3, \ s_2=s_2(r_1,r_2,r_3)=r_1 r_2 +r_2 r_3+r_3 r_1.
\end{equation}
Then the point
\begin{equation}
\label{bzaaQ}
Q=Q_{r_1,r_2,r_3}=(x_1,y_1):=(x_0+s_2, -y_0-s_1 s_2)=
\end{equation}
$$(x_0+r_1 r_2 +r_2 r_3+r_3 r_1, -(r_1+r_2)(r_2+r_3)(r_3+r_1))$$
lies in  $E(\bar{K})$ and satisfies
$$2Q=Q_{r_1,r_2,r_3}=P \in E(\bar{K}).$$
In addition,
\begin{equation}
\label{bzaaS}
l:=-s_1=-(r_1+r_2+r_3)
\end{equation}
 is the slope of the tangent  line $\mathcal{L}_Q$  to $E$ at $Q$ \cite[Proof of Th. 2.1]{BZAA}. More precisely, $\mathcal{L}_Q$ passes through the point $-P=(x_0,-y_0)$
and is defined by the equation
$$y=l (x-x_0)-y_0.$$

Distinct choices of triples $(r_1,r_2,r_3)$ give rise to distinct halves $Q_{r_1,r_2,r_3}$'s. More precisely \cite[Th. 2.3]{BZAA},
\begin{equation}
\label{plusW}
Q_{r_1,-r_2,-r_3}=Q_{r_1,r_2,r_3}+W_1, Q_{-r_1,r_2,-r_3}=Q_{r_1,r_2,r_3}+W_2, Q_{-r_1,-r_2,r_3}=Q_{r_1,r_2,r_3}+W_3.
\end{equation}
(That is how we get all four $Q\in E(\bar{K})$ with $2Q=P$.)
Conversely, if we are given $Q=(x_1,y_1)\in  E(\bar{K})$, then $Q=Q_{r_1,r_2,r_3}$ with
\begin{equation}
\label{QtoR}
r_i=-\frac{y_1}{2}\cdot \left(-\frac{1}{x_1-\alpha_i}+\frac{1}{x_1-\alpha_j}+\frac{1}{x_1-\alpha_k}\right)
\end{equation}
for each permutation $i,j,k$ of $1,2,3$.
\end{sect}

Formulas from Sect. \ref{bzAA} almost immediately lead to the following statement.

\begin{thm}
\label{div2S}
Suppose that $P=(x_0,y_0) \in E(K)$.
Suppose that square roots $r_i=\sqrt{x_0-\alpha_i}\in \bar{K}$ ($i=1,2,3$) satisfy
\begin{equation}
\label{rK}
r_1 r_2 r_3=-y_0; \ s_1=r_1+r_2+r_3 \in K, \ s_2=r_1 r_2 +r_2 r_3+r_3 r_1 \in K.
\end{equation}
Then the point $Q_{r_1,r_2,r_3}\in E(\bar{K})$ defined by  formula \eqref{bzaaQ} enjoys the following properties:
$$Q_{r_1,r_2,r_3}\in E(K),  \ 2 Q_{r_1,r_2,r_3}=P.$$
Conversely, suppose that $Q \in E(K)$ satisfies $2Q=P$. Then there exists precisely one triple $\{r_1,r_2,r_3\}$ of square roots $r_i=\sqrt{x_0-\alpha_i}\in \bar{K}$ ($i=1,2,3$)
that satisfy \eqref{rK} and such that $Q=Q_{r_1,r_2,r_3}$.
\end{thm}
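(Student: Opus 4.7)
My plan splits neatly along the two directions of the ``if and only if.''

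\emph{Forward direction.} Given a triple $(r_1,r_2,r_3)$ satisfying \eqref{rK}, the construction of Section~\ref{bzAA} already gives
\[
Q_{r_1,r_2,r_3}=(x_0+s_2,\,-y_0-s_1 s_2)\in E(\bar K),\qquad 2\,Q_{r_1,r_2,r_3}=P.
\]
Since $x_0,y_0,s_1,s_2\in K$ by hypothesis, both coordinates of $Q_{r_1,r_2,r_3}$ lie in $K$, so $Q_{r_1,r_2,r_3}\in E(K)$. That is all one needs to say for this half.

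\emph{Converse direction.} Uniqueness of the triple is immediate from \eqref{plusW}, which shows that the four triples $(r_1,r_2,r_3)$ with $r_1 r_2 r_3=-y_0$ are in bijection with the four halves of $P$ in $E(\bar K)$; existence of some triple with $Q=Q_{r_1,r_2,r_3}$ comes from the same dictionary. The substantive task is to check that $s_1,s_2\in K$ whenever $Q=(x_1,y_1)\in E(K)$. Comparing the two sides of $(x_1,y_1)=(x_0+s_2,\,-y_0-s_1 s_2)$ gives immediately
\[
s_2=x_1-x_0\in K,\qquad s_1 s_2=-y_0-y_1\in K,
\]
and when $s_2\neq 0$ the conclusion $s_1\in K$ follows by one division.

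The case $s_2=0$ is the one place needing extra care, and is where I expect the main obstacle to lie. Here $x_1=x_0$ forces $y_1^2=f(x_0)=y_0^2$; since $y_1=y_0$ would give $P=\infty$, we must have $y_1=-y_0$, so $Q=-P$ and $3P=\infty$. My plan to handle this case is to substitute $x_1=x_0$ and $y_1=-y_0$ into the inversion formula \eqref{QtoR}, clear the denominator using $(x_0-\alpha_1)(x_0-\alpha_2)(x_0-\alpha_3)=f(x_0)=y_0^2$, and sum the three resulting expressions; after the evident cross-cancellations the sum telescopes to
\[
s_1=\frac{y_0}{2}\sum_{l=1}^{3}\frac{1}{x_0-\alpha_l}=\frac{f'(x_0)}{2y_0},
\]
which is manifestly in $K$ since $f\in K[x]$ and $y_0\in K^{\times}$. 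This closes the last case and completes the plan.
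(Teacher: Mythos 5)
Your forward direction is exactly the paper's. In the converse, however, you take a genuinely different route to the key point $s_1\in K$. The paper gets it in one uniform stroke: by \eqref{bzaaS} the quantity $-s_1$ is the slope of the tangent line to $E$ at $Q$, and that slope lies in $K$ because $Q$ is a $K$-point with $y_1\ne 0$ (as $2Q=P\ne\infty$). You instead read off $s_2=x_1-x_0$ and $s_1s_2=-y_0-y_1$ from \eqref{bzaaQ} and divide, which forces a separate treatment of $s_2=0$. Your handling of that case is correct: $s_2=0$ forces $Q=-P$ and $3P=\infty$ (hence $y_0\ne 0$, since ruling out $y_1=y_0$ rules out $y_0=-y_0$), and summing the three instances of \eqref{QtoR} does telescope to $s_1=-\tfrac{y_1}{2}\sum_{l}(x_1-\alpha_l)^{-1}=f'(x_0)/(2y_0)\in K$ --- consistent with the paper's slope interpretation, since the tangent slope at $-P=(x_0,-y_0)$ is $f'(x_0)/(-2y_0)=-s_1$. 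So both arguments are sound: the paper's is shorter because the geometric meaning of $s_1$ absorbs the degenerate $3$-torsion case automatically, while yours is more self-contained in that it relies only on the coordinate formulas \eqref{bzaaQ} and \eqref{QtoR}. The existence and uniqueness bookkeeping via \eqref{plusW} is the same in both.
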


\begin{proof}
Suppose that the square roots $r_i$ ($i=1,2,3$) satisfy \eqref{rK}. This implies that both $s_1$ and $s_2$ defined in \eqref{SymR} lie in $K$.
The point
$Q_{r_1,r_2,r_3}=(x_1,y_1)$  defined by  formula \eqref{bzaaQ}  lies in $E(\bar{K})$ and $2Q=P$.
Since, $x_0,y_0, s_1, s_2\in K$,  the formulas \eqref{bzaaQ} imply that $x_1,y_1 \in K$, i.e., $Q\in E(K)$.

Conversely, suppose that $P=2 Q$ with $Q=(x_1,y_1) \in E(K)$. Then
$$Q\in E(K)\subset E(\bar{K}); \ x_1, y_1 \in K.$$
There exists exactly one triple $\{r_1,r_2,r_3\}$ of  square roots $r_i=\sqrt{x_0-\alpha_i} \in \bar{K}$ that satisfy \eqref{bzaaR}  such that $r_1 r_2 r_3=-y_0$ and
$Q=Q_{r_1,r_2,r_3}$. In light of \eqref{bzaaQ},  $s_2=x_1-x_0$. Since both $x_0,x_1\in K$,  we obtain that $s_2 \in K$.
Since $Q \in E(K)$, the slope $l$ of the tangent line to $E$ at $Q$ lies in $K$. In light of \eqref{bzaaS}, $l =-s_1$ and therefore
 $s_1 \in K$. To summarize: $r_1 r_2 r_3=-y_0$ and
 $r_1+r_2+r_3=s_1 \in K$, $\ s_2 \in K.$
 In other words, the triple $\{r_1,r_2,r_3\}$ satisfies the conditions \eqref{rK}.
\end{proof}

\begin{cor}
\label{Crit2}
Suppose that $P=(x_0,y_0) \in E(K)$.
Then the following conditions are equivalent.
\begin{itemize}
\item[(i)]
$P \in 2 E(K)$, i.e., there exists $Q\in E(K)$ such that $2Q=P$.

\item[(ii)]
One may choose square roots $r_i=\sqrt{x_0-\alpha_i}\in \bar{K}$ in such a way that
$$s_1=r_1+r_2+r_3 \in K, \ s_2=r_1 r_2 +r_2 r_3+r_3 r_1 \in K.$$
\item[(iii)]
One may choose square roots $r_i=\sqrt{x_0-\alpha_i}\in \bar{K}$ in such a way that
$$r_1 r_2 r_3=-y_0; \ s_1=r_1+r_2+r_3 \in K, \ s_2=r_1 r_2 +r_2 r_3+r_3 r_1 \in K.$$
\end{itemize}
\end{cor}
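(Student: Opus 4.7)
The plan is to deduce the equivalence (i)$\Leftrightarrow$(iii) directly from Theorem \ref{div2S}, note that (iii)$\Rightarrow$(ii) is tautological, and then handle the implication (ii)$\Rightarrow$(iii) by a small sign manipulation based on the observation that, under simultaneous negation of all three square roots, $s_1$ and $r_1 r_2 r_3$ change sign while $s_2$ is preserved.

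The first two equivalences are essentially a repackaging of Theorem \ref{div2S}. For (iii)$\Rightarrow$(i), the hypothesis of (iii) is exactly the hypothesis of the direct assertion of Theorem \ref{div2S}, which yields a point $Q = Q_{r_1,r_2,r_3} \in E(K)$ with $2Q = P$, hence $P \in 2 E(K)$. For (i)$\Rightarrow$(iii), any such $Q \in E(K)$ gives, by the converse part of Theorem \ref{div2S}, a unique triple of square roots $r_i = \sqrt{x_0 - \alpha_i}$ satisfying $r_1 r_2 r_3 = -y_0$ together with $s_1, s_2 \in K$. The implication (iii)$\Rightarrow$(ii) requires no work, since (iii) is literally (ii) augmented by the single extra condition $r_1 r_2 r_3 = -y_0$.

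The only genuinely new implication is (ii)$\Rightarrow$(iii). Given square roots $r_1, r_2, r_3$ with $s_1, s_2 \in K$, formula \eqref{signY} forces $r_1 r_2 r_3 = \epsilon y_0$ for some sign $\epsilon \in \{+1,-1\}$. If $\epsilon = -1$, the triple $(r_1,r_2,r_3)$ itself satisfies (iii) and we are done. If $\epsilon = +1$, I would replace $(r_1, r_2, r_3)$ by $(-r_1, -r_2, -r_3)$, which are again valid square roots of $x_0 - \alpha_i$. Under this substitution $s_1$ becomes $-s_1$, which is still in $K$; $s_2$ is unchanged, hence still in $K$; and the product becomes $-r_1 r_2 r_3 = -y_0$, as required. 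This new triple therefore satisfies (iii).

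There is no real obstacle: the argument is just a bookkeeping exercise in the action of sign changes on the elementary symmetric functions of $(r_1,r_2,r_3)$. The only minor point worth remembering is the degenerate case $y_0 = 0$, where $P$ has order $2$ and $r_1 r_2 r_3 = 0 = -y_0$ automatically, so (ii) and (iii) coincide without any adjustment.
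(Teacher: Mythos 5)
Your proposal is correct and follows essentially the same route as the paper: the equivalence of (i) and (iii) is read off from Theorem \ref{div2S}, and (ii)$\Rightarrow$(iii) is handled by the same sign-flip $(r_1,r_2,r_3)\mapsto(-r_1,-r_2,-r_3)$, which negates $s_1$ and the product while preserving $s_2$. The remark about the degenerate case $y_0=0$ is a harmless extra observation not needed in the paper's argument.
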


\begin{proof}
By Theorem \ref{div2S}, one has only to check that (ii) implies (iii).
Indeed,  suppose we are given a triple $(r_1,r_2,r_3)$ of square roots
$r_i=\sqrt{x_0-\alpha_i}$ such that
$$s_1 =r_1+r_2+r_3 \in K, s_2= r_1 r_2+r_2 r_3+r_3 r_1 \in K.$$
By equality \eqref{signY}, $r_1r_2 r_3=\pm y_0$.
Replacing if necessary $(r_1,r_2,r_3)$
by $(-r_1,-r_2,-r_3)$ (and $s_1$ by $-s_1$), we may and will assume that $r_1 r_2 r_3=-y_0$  and still $s_1, s_2 \in K$.
This proves that (ii) implies (actually equivalent to) (iii).
\end{proof}

\section{Division by 2}
\label{sec2}
In this section we discuss division by 2 in $E(K)$ when $E(K)$ contains exactly one point of order 2.

Let $K$ be a field with $\fchar(K)\ne 2$. Let $\alpha$ be an element of $K$,
$$g(x)=x^2+px+q\in K[x]$$
 a monic irreducible quadratic polynomial (in particular, $\alpha$ is {\sl not} a root of $g(x)$), and
$K_g=K[x]/g(x)K[x]$ the corresponding quadratic field extension of $K$.  We write $\mathbf{x}$ for the image of $x$ in $K_g$ with respect to the natural surjective homomorphism
$$K[x]\twoheadrightarrow K[x]/g(x)K[x]=K_g.$$
Clearly, $\mathbf{x}^2+p \mathbf{x}+q=0.$
Let $\iota: K_g \to K_g$ be the only nontrivial $K$-linear  automorphism (involution) of  $K_g$. We write
$$\mathrm{Tr}:  K_g \to K, \ z \mapsto z+\iota(z),  \ \mathrm{Norm}:  K_g \to K, \ z \mapsto z\cdot\iota(z)$$
for the trace and norm maps attached to $K_g/K$. We have
$$z^2-\Tr(z)+\Norm(z)=(z-z)(z-\iota(z))=0 \ \forall z\in K_g.$$
For each $a \in K$ we have
$$g(a)=\Norm(a-\mathbf{x}).$$
We also have
$$\iota(\mathbf{x})= -p-\mathbf{x}, \ \Norm(\mathbf{x})=q, \ \Tr(\mathbf{x})=-p.$$
In addition, we have the following equality in $K_g[x]$:
$$g(x)=x^2+px+q=(x-\mathbf{x})(x-\iota \mathbf{x})=(x-\mathbf{x})(x+p+\mathbf{x}).$$
Let us consider the elliptic curve
\begin{equation}
\label{equationE}
E=\mathcal{E}_{\alpha,p,q}: y^2=f(x):=(x-\alpha) g(x).
\end{equation}
Clearly, $E(K)$ contains exactly one point of order 2, namely, $(\alpha,0)$.
On the other hand, $E(K_g)$ contains two remaining points of order $2$,
namely, $(\mathbf{x},0)$ and $(\iota\mathbf{x},0)=(-p-\mathbf{x},0)$.

We start with the following result that was inspired by a paper of Schaefer \cite{Schaefer}.

\begin{thm}
\label{division}
A point $P=(x_0,y_0)\in E(K)$ lies in $2 E(K)$ if and only if
$x_0-\mathbf{x}$ is a square in $K_g$.
If this is the case, then $x_0-\alpha$ is a square in $K$.
\end{thm}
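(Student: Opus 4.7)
The plan is to apply Corollary~\ref{Crit2} and Theorem~\ref{div2S} to the factorization $f(x)=(x-\alpha)(x-\mathbf{x})(x-\iota\mathbf{x})$, taking $\alpha_1=\alpha\in K$ and $\alpha_2=\mathbf{x}$, $\alpha_3=\iota\mathbf{x}\in K_g$. The key idea is to use the $\Gal(K_g/K)$-symmetry: whatever square root $r_2=\sqrt{x_0-\mathbf{x}}$ we choose, its Galois conjugate $\iota(r_2)$ serves as a valid $r_3=\sqrt{x_0-\iota\mathbf{x}}$, and the two together contribute to $s_1,s_2$ only via the trace and the norm.

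For the direction $(\Rightarrow)$, I would start from $P=2Q$ with $Q=(x_1,y_1)\in E(K)$. Since $P$ is a finite point, $Q$ cannot be $2$-torsion; in particular $y_1\neq 0$ and $x_1\notin\{\alpha,\mathbf{x},\iota\mathbf{x}\}$. I then read off $r_2=\sqrt{x_0-\mathbf{x}}$ directly from formula~\eqref{QtoR}; the resulting expression visibly lies in $K_g$ because every quantity on the right-hand side does, proving that $x_0-\mathbf{x}$ is a square in $K_g$. For the secondary assertion, the analogous formula for $r_1$ groups the two summands involving $\mathbf{x},\iota\mathbf{x}$ into the $\iota$-invariant combination $\tfrac{1}{x_1-\mathbf{x}}+\tfrac{1}{x_1-\iota\mathbf{x}}=\tfrac{2x_1+p}{g(x_1)}\in K$, so $r_1\in K$, and hence $x_0-\alpha=r_1^2$ is a square in $K$.

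For $(\Leftarrow)$, assuming $r_2\in K_g$ with $r_2^2=x_0-\mathbf{x}$, I would set $r_3:=\iota(r_2)\in K_g$, giving automatically $r_3^2=x_0-\iota\mathbf{x}$ and $r_2r_3=\Norm_{K_g/K}(r_2)\in K$ with $(r_2r_3)^2=g(x_0)$. The identity $y_0^2=(x_0-\alpha)g(x_0)$ then forces $x_0-\alpha$ to be a square in $K$ (using irreducibility of $g$ over $K$ to guarantee $g(x_0)\neq 0$ when $y_0\neq 0$), and supplies $r_1:=-y_0/(r_2r_3)\in K$ with $r_1^2=x_0-\alpha$ and the correct sign $r_1r_2r_3=-y_0$. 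A direct check
\[
s_1=r_1+\Tr(r_2)\in K,\qquad s_2=r_1\Tr(r_2)+\Norm(r_2)\in K
\]
then allows Theorem~\ref{div2S} to deliver $Q_{r_1,r_2,r_3}\in E(K)$ with $2Q=P$. The boundary case $y_0=0$ forces $P=(\alpha,0)$ (the only $K$-rational $2$-torsion point) and runs through identically with $r_1=0$.

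I expect the main issue to be purely bookkeeping: making sure the sign of $r_1$ is picked so the triple satisfies all three conditions of Theorem~\ref{div2S} simultaneously (not only those of Corollary~\ref{Crit2}), and cleanly isolating the $y_0=0$ corner. Once the involution $\iota$ is used to pair $r_2$ with $r_3$, the rest collapses by the trace/norm reduction.
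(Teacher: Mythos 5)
Your proof is correct and follows essentially the same route as the paper: the heart of both arguments is pairing $\rr=\sqrt{x_0-\mathbf{x}}$ with its Galois conjugate $\iota(\rr)$ so that the symmetric functions $s_1,s_2$ collapse to $r_1+\Tr(\rr)$ and $r_1\Tr(\rr)+\Norm(\rr)$, and using $y_0^2=(x_0-\alpha)g(x_0)$ together with $g(x_0)=\Norm(\rr)^2\ne 0$ to produce $r_1\in K$ with the correct sign, after which Theorem \ref{div2S} finishes. The only cosmetic difference is in the ``only if'' direction, where you extract the square root explicitly from formula \eqref{QtoR} (and likewise get $r_1\in K$ from the $\iota$-invariant grouping of terms), whereas the paper simply invokes Theorem \ref{divCLass} over $K_g$; both are valid one-step reductions.
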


\begin{rem}
\label{squareR}
\begin{itemize}
\item[(i)]
If $x_0 \in K$, then $x_0-\mathbf{x} \ne 0$, because $\mathbf{x} \not\in K$.
\item[(ii)]
Suppose that $\rr \in K_g$  and $r\in K$ satisfy
$$r^2=x_0-\alpha, \ \rr^2=x_0-\mathbf{x}.$$
Then
$$\left(r \cdot \Norm(\rr)\right)^2=r^2 \cdot \Norm(x_0-\mathbf{x})=(x_0-\alpha)g(x_0)=y_0^2.$$
This means that
\begin{equation}
\label{sign}
r \cdot \Norm(\rr)=\pm y_0.
\end{equation}
In addition,
$$(\Tr({\rr}))^2=(\rr+\iota(\rr))^2=\rr^2+2\rr\iota(\rr)+\iota(\rr)^2=
\left(\rr^2+\iota(\rr^2)\right)+2\Norm(\rr)$$
$$=\Tr(x_0-\mathbf{x})+2\Norm(\rr)=2x_0+p+2\Norm(\rr).$$
It follows that
\begin{equation}
\label{sign2}
(\Tr({\rr}))^2=2x_0+p+2\Norm(\rr).
\end{equation}
Notice also that
$$\begin{aligned}\Norm(r+\rr)=(r+\rr)\iota(r+\rr) =(r+\rr)(r+\iota(\rr))\\=r^2+r(\rr+\iota(\rr))+\rr \iota(\rr)=(x_0-\alpha)+r\Tr(\rr)+\Norm(\rr).\end{aligned}$$
Consequently,
\begin{equation}
\label{sign3}
\Norm(r+\rr)=(x_0-\alpha)+r\Tr(\rr)+\Norm(\rr).
\end{equation}
\end{itemize}
\end{rem}

The next assertion describes explicitly both $\frac{1}{2} P$'s in $E(K)$.

\begin{thm}
\label{divExp}
Suppose that $P=(x_0,y_0)\in E(K)$.  Suppose that $r\in K$ and $\rr \in K_g$ satisfy
$$r^2=x_0-\alpha, \ \rr^2=x_0-\mathbf{x}, \ r \Norm(\rr)=-y_0.$$
Then:
\begin{itemize}
\item[(i)]
 the points
$$Q_{r,\rr}:=Q_{\rr,\iota(\rr),r}(x_1,y_1):= \left(x_0+\Tr(\rr)r+\Norm(\rr), -y_0-\left(r+\Tr(\rr)\right)\left(\Tr(\rr)r+\Norm(\rr)\right)\right)=$$
$$\left(\alpha+\Norm(r+\rr), -\Tr(\rr)\Norm(r+\rr)\right)$$
and
$$\begin{aligned}Q_{r,-\rr}:=Q_{-\rr,-\iota(\rr),r}=(x_{-1},y_{-1}):=\\ \left(x_0-\Tr(\rr)r+\Norm(\rr), -y_0-\left(r-\Tr(\rr)\right)\left(-\Tr(\rr)r+\Norm(\rr)\right)\right)\\ =
 \left(\alpha+\Norm(r-\rr), \Tr(\rr)\Norm(r-\rr)\right)\end{aligned}$$
are distinct points of $E(K)$ that satisfy
$$2 Q_{r,\rr}=2 Q_{r,-\rr}=P \in E(K).$$
\item[(ii)]
The tangent lines to $E$ at $Q_{r,\rr}$ and $Q_{r,-\rr}$ are defined by the equations
$$\mathcal L_{r,\rr}: y=l_{r,\rr}(x-x_0)-y_0 \ \text{ and }\mathcal L_{r,-\rr}: y=l_{r,-\rr}(x-x_0)-y_0,$$
respectively, where
$$l_{r,\rr}=-\left(r+\Tr(\rr)\right), \ l_{r,-\rr}=-\left(r-\Tr(\rr)\right).$$

\end{itemize}
\end{thm}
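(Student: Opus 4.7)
The plan is to specialize the general construction of Section \ref{bzAA} to the quadratic extension $K_g/K$ by labeling the roots of $f(x)$ in $\bar K$ as $\alpha_1=\mathbf{x}$, $\alpha_2=\iota(\mathbf{x})=-p-\mathbf{x}$, $\alpha_3=\alpha$ and by choosing the compatible square roots $r_1=\rr$, $r_2=\iota(\rr)$, $r_3=r$. The hypothesis $r\,\Norm(\rr)=-y_0$ is then exactly the normalization $r_1 r_2 r_3=-y_0$ required in \eqref{bzaaY}, so Theorem \ref{div2S} will apply once I verify that the corresponding $s_1$ and $s_2$ lie in $K$.

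First I would compute
$$s_1=r_1+r_2+r_3=\Tr(\rr)+r\in K, \quad s_2=\rr\,\iota(\rr)+r\bigl(\rr+\iota(\rr)\bigr)=\Norm(\rr)+r\,\Tr(\rr)\in K,$$
and substitute into formula \eqref{bzaaQ} to read off the first expression for $Q_{r,\rr}=(x_1,y_1)$, which already shows $Q_{r,\rr}\in E(K)$ and $2Q_{r,\rr}=P$. To pass to the second form, the $x$-coordinate is rewritten via \eqref{sign3}: $\alpha+\Norm(r+\rr)=x_0+r\,\Tr(\rr)+\Norm(\rr)=x_1$. For the $y$-coordinate, I expand $-y_0-(r+\Tr(\rr))(\Tr(\rr)r+\Norm(\rr))$, use the hypothesis $-y_0=r\,\Norm(\rr)$ to cancel the $r\,\Norm(\rr)$ cross term, and factor what remains as $-\Tr(\rr)\bigl(r^2+r\,\Tr(\rr)+\Norm(\rr)\bigr)$; a second application of \eqref{sign3} (together with $r^2=x_0-\alpha$) identifies this with $-\Tr(\rr)\,\Norm(r+\rr)$.

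The formulas for $Q_{r,-\rr}$ are obtained by replacing $(\rr,\iota(\rr))$ with $(-\rr,-\iota(\rr))=(-\rr,\iota(-\rr))$: the condition $r_1 r_2 r_3=-y_0$ is preserved since $\Norm(-\rr)=\Norm(\rr)$, and $\Tr(\rr)$ simply changes sign throughout the preceding calculation. Distinctness of the two halves is then immediate from the transformation rule \eqref{plusW} applied to the simultaneous sign change in the first two entries of the triple: $Q_{r,-\rr}=Q_{r,\rr}+W_3=Q_{r,\rr}+(\alpha,0)$, and $(\alpha,0)\neq\infty$. Finally, part (ii) follows directly from the slope formula \eqref{bzaaS}, $l=-s_1$: for $Q_{r,\rr}$ the slope is $-(r+\Tr(\rr))$ and for $Q_{r,-\rr}$ it is $-(r-\Tr(\rr))$, and each tangent line passes through $-P=(x_0,-y_0)$ by the very construction recalled in Section \ref{bzAA}.

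The only step requiring care is the sign bookkeeping in the algebraic identity for the $y$-coordinate in the $\Norm(r\pm\rr)$ form; it reduces to the expansion of a quadratic expression in $r$ with coefficients in $K_g$, and once this is verified the rest of the proof is a direct transcription of Section \ref{bzAA} into the language of the extension $K_g/K$.
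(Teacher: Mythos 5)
Your proposal is correct and follows essentially the same route as the paper: both specialize the explicit half-point formulas of Section \ref{bzAA} to the root labeling $\alpha_1=\mathbf{x}$, $\alpha_2=\iota(\mathbf{x})$, $\alpha_3=\alpha$ with square-root triples $(\rr,\iota(\rr),r)$ and $(-\rr,-\iota(\rr),r)$, verify $s_1,s_2\in K$, and use \eqref{sign3} for the $\Norm(r\pm\rr)$ form and \eqref{bzaaS} for the tangent slopes. The only cosmetic difference is that you invoke Theorem \ref{div2S} directly (and \eqref{plusW} for distinctness) where the paper argues via Theorem \ref{divCLass} over $K_g$ and then observes the coordinates lie in $K$; these are the same argument.
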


\begin{rem}
\label{choicer}
Suppose we are given $\rr \in K_g$ with $\rr^2=x_0-\mathbf{x}$. It follows from Theorem \ref{division} and \eqref{sign} that there exists precisely one $r \in K$ such that $r^2=x_0-\alpha$
and $r\Norm(\rr)=-y_0$.
\end{rem}

\begin{proof}[Proof of Theorem \ref{division} and \ref{divExp}]

We start with the proof of Theorem \ref{division}.
Suppose that $P=(x_0,y_0) \in 2 E(K)$.
Since $E(K_g)$ contains all three points of order 2 on $E$ and $(\mathbf{x},0)$ is one of them; it follows from Theorem \ref{divCLass} that
$x_0-\mathbf{x}$ is a  square in $K_g$.

Conversely, suppose that $x_0-\mathbf{x}$ is a  square in $K_g$. This means that
there exists  $\rr \in K_g$
such that  $$x_0-\mathbf{x}=\rr^2=(-\rr)^2.$$
 By Remark \ref{squareR}(i),  $\rr$ does not lie in $K$. In particular,  $\rr\ne 0$.
We have
$$x_0-\iota(\mathbf{x})=\iota(x_0-\mathbf{x})=\iota(\rr^2)=\iota(\rr)^2.$$
This implies that
$$g(x_0)=(x_0-\mathbf{x})(x_0-\iota\mathbf{x})=(\rr\iota(\rr))^2=\Norm(\rr)^2$$
is a square in $K$, because $\Norm(\rr)\in K$. Since $(x_0,y_0) \in E(K)$,
$$y_0^2=f(x_0)=(x_0-\alpha) g(x_0)=(x_0-\alpha) \Norm(\rr)^2$$
is a square in $K$.  This implies that $x_0-\alpha$ is also a square in $K$, because the norm of nonzero $\rr$ also does {\sl not} vanish.
It follows that $x_0-\alpha$ is  a square in $K_g$ as well.
By Remark \ref{squareR}(ii), we may
choose
$$r =\sqrt{x_0-\alpha}\in K\subset K_g$$ in such a way that
$$r \Norm(\rr)=r \Norm(-\rr)= r \cdot\rr\cdot\iota(\rr)=-y_0.$$
Since all three  $x_0-\mathbf{x}$, $x_0-\iota(\mathbf{x})$, and $x_0-\alpha$ are squares in $K_g$, Theorem \ref{divCLass} implies that
$$P=(x_0,y_0) \in 2 E(K_g);$$
by formula \eqref{bzaaQ} combined with the condition $r\Norm(\rr)=-y_0$ and equality \eqref{sign3},  the two different choices   $(\rr, \iota(\rr),r)$ and  $(-\rr, -\iota(\rr),r)$ of the corresponding square roots give us
$$Q_{r,\rr}=\left(x_0+(\rr+\iota(\rr))r+\rr\iota(\rr)), -y_0-\left(r+\rr+\iota(\rr)\right)((\rr+\iota(\rr))r+\rr\iota(\rr) )\right)$$
 $$=\left(x_0+\Tr(\rr)r+\Norm(\rr), -y_0-\left(r+\Tr(\rr)\right)\left(\Tr(\rr)r+\Norm(\rr\right)\right))$$
$$=\left(\alpha+\Norm(r+\rr), -\Tr(\rr)\Norm(r+\rr)\right)$$
and
$$Q_{r,-\rr}= \left(x_0-(\rr+\iota(\rr))r+\rr\iota(\rr), -y_0-\left(r-\rr-\iota(\rr)\right)\left((-\rr-\iota(\rr))r+\rr\iota(\rr)\right)\right)=$$
$$\left(\alpha+\Norm(-r+\rr), \Tr(\rr)\Norm(r-\rr)\right)=$$
$$\left(x_0-\Tr(\rr)r+\Norm(\rr), -y_0-\left(r-\Tr(\rr)\right)\left(-\Tr(\rr)r+\Norm(\rr)\right)\right)$$
that are distinct points of $E(K_g)$ such that
$$2 Q_{r,\rr}=2 Q_{r,-\rr}=P \in E(K).$$
However, it is clear that they both lie in $E(K)$, since all three $r, \Tr(\rr)$, and $\Norm(\rr)$ lie in $K$. This
ends the proof of Theorem \ref{division}. On the other hand,
$Q_{r,\rr}$ and $Q_{r,-\rr}$ are exactly the points that appear in the statement of Theorem \ref{divExp}; this proves Theorem \ref{divExp}(i) as well.
Theorem \ref{divExp}(ii)  follows from the explicit formula \eqref{bzaaS} for the slope $l$ of the tangent line at $\frac{1}{2}P$.
\end{proof}

\begin{rem}
If $P=(x_0,y_0)\in 2 E(K)$, then $g(x_0)$ is a square in $K$. Indeed, we have
$$y_0^2=f(x_0)=g(x_0)(x_0-\alpha),  \ g(x_0)=\Norm(x_0-\mathbf{x}).$$
 Since $x_0-\mathbf{x}$ is a square in $K_g$, its norm $g(x_0)$ is a square in $K$.
\end{rem}

\begin{ex}
\label{ex4}
Suppose  $g(x)=x^2+px+q \in K[x]$ is an irreducible quadratic polynomial over $K$
and $-\mathbf{x}$ is a square in $K_g$, i.e., there are exist $u,v \in K$ such that
$$-\mathbf{x} =(u\mathbf{x}+v)^2$$
in $K_g$. This means that the polynomial $(ux+v)^2 +x$ is divisible by $x^2+px+q$ in $K[x]$, i.e.,
 the polynomial
$$(2uv+1-u^2 p)x+(v^2-u^2q)=(ux+v)^2 +x-u^2(x^2+px+q)$$ of degree $<2$
is divisible by quadratic  $x^2+px+q$ in $K[x]$ and therefore is identically zero, i.e.,
$$2uv+1=u^2 \cdot p, \ v^2=u^2 q.$$
In addition, $u \ne 0$, since $-\mathbf{x} \in K_g \setminus K$.
This implies that there exists $b \in K$ such that
$$q=b^2, \ v=ub.$$
It follows that
$$u^2 \cdot p=2 u^2 b+1, \ (p-2b) u^2=1.$$
This means that
$$p=2b+\frac{1}{u^2}, q=b^2, \ v=ub.$$
Let us put
$$a:=\frac{1}{u} \in K \setminus \{0\}.$$
Then we obtain that $$p=a^2+2b, \ q=b^2,$$
$$g(x)=x^2+(a^2+2b)x+b^2; \  u=\frac{1}{a},  \ v =\frac{b}{a},$$
and
$$\left(\frac{1}{a}\mathbf{x}+\frac{b}{a}\right)^2=-\mathbf{x}$$
in $K_g$.
Since $g(x)$ is irreducible over $K$, its discriminant
$$(a^2+2b)^2-4b^2=a^4+4a^2b=a^2(a^2+4b)$$
is {\sl not} a square in $K$, i.e, $a^2+4b$ is {\sl not} a square in $K$.
In addition, the constant term $b^2$ of $g(x)$ does {\sl not} vanish, i.e.,
$b \ne 0$.
Now let us consider the $K$-point $W=(0,0)$ of the elliptic curve
$$\mathcal{E}^{(4)}_{a,b}=\mathcal{E}_{0, a^2+2b,b^2}: y^2= \left(x^2+(a^2+2b)x+b^2\right)x.$$
Clearly, $W$ has order 2. It follows from Theorem \ref{division} that $W$ is divisible by 2 in $E(K)$,
i.e., there are two points $Q_{+}$ and $Q_{-}$ of order 4 in $\mathcal{E}^{(4)}_{a,b}(K)$ such that
$$2 Q_{+}=2Q_{-}=W.$$
In order to find explicitly $Q_{+}$ and $Q_{-}$, let us apply Theorem \ref{divExp}. We have
$$r=0, \ \rr=\frac{1}{a}(\mathbf{x}+b);$$
$$\Tr(\rr)=\frac{1}{a} (-p)+2 \cdot \frac{b}{a}=- \frac{(a^2+2b)}{a}+\frac{2b}{a}=-a,$$
$$\Norm(\rr)=\frac{\Norm(\mathbf{x}+b)}{a^2}=\frac{g(-b)}{a^2}=-b.$$
Now the explicit formulas of Theorem \ref{divExp} give us
$$Q_{+}=(-b,ab), \ Q_{+}=(-b,-ab).$$
\end{ex}

\section{Another criterion of divisibility by 2}
\label{reducible}
We keep the notation of Section \ref{sec2}. However,
 we drop the assumption that $g(x)=x^2+px+q\in K[x]$ is irreducible over $K$ and assume only that it has {\sl no} multiple roots.  Let
$\alpha_1, \alpha_2$ be (distinct) roots of $g(x)$ in $\bar{K}$. We have
\begin{equation}
\label{Vieta}
\alpha_1+\alpha_2= -p, \ \alpha_1 \alpha_2=q.
\end{equation}

\begin{thm}
\label{divRed}
Let $\alpha$ be an element of $K$ that is not a root of $g(x)$. Let us consider the elliptic curve
$$E: y^2=(x-\alpha)g(x)$$
over $K$. Let
 $$W_3=(\alpha_3,0)=(\alpha,0) \in E(K)$$
 be a point of order $2$.

Let $P=(x_0,y_0)$ be a $K$-point of $E$. Then $P$ is divisible by $2$ in $E(K)$ if and only if the following two conditions hold.
\begin{itemize}
\item[(i)]
The difference $x_0-\alpha$ is a square in $K$.
\item[(ii)]
There exist square roots $r=\sqrt{x_0-\alpha}\in K$ and
$$r_i =\sqrt{x_0-\alpha_i} \in \bar{K} \ (i=1,2)$$
such that
\begin{equation}
\label{prodNot2}
r_1 r_2 r=-y_0;   \ r_1+r_2 \in K.
\end{equation}
\end{itemize}
In addition, if  square roots $r_1,r_2,r_3$ satisfy conditions (ii),  then
$$Q_{+}=Q_{r_1,r_2,r}=(x_0+(r_1+r_2)r+r_1 r_2,  -y_0-((r_1+r_2)+r)((r_1+r_2)r+r_1 r_2)$$
and
$$Q_{-}=Q_{-r_1,-r_2,r}=(x_0-(r_1+r_2)r+r_1 r_2,  -y_0-(-(r_1+r_2)+r)(-(r_1+r_2)r+r_1 r_2)$$
are distinct points of  $E(K)$, which satisfy
 $$2Q_{+}=2 Q_{-}=P,  \ Q_{-}=Q_{+}+W_3.$$
 Conversely, if $Q\in E(K)$ satisfies $2Q=P$, then there exist square roots $r,r_1,r_2$ that satisfy conditions (ii)
 and $Q=Q_{r_1,r_2,r}$.
\end{thm}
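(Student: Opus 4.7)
The plan is to reduce Theorem \ref{divRed} to Corollary \ref{Crit2} from the preceding section, which already characterizes divisibility by $2$ in $E(K)$ via the existence of a triple of square roots $(r_1, r_2, r_3)$ of the elements $x_0 - \alpha_i$ satisfying $r_1 r_2 r_3 = -y_0$ and having elementary symmetric functions $s_1 = r_1+r_2+r_3$ and $s_2 = r_1 r_2 + r_2 r_3 + r_3 r_1$ in $K$. What remains is to transform the abstract condition ``$s_1, s_2 \in K$'' into the concrete conditions (i) and (ii), exploiting that $\alpha_3 = \alpha \in K$ while $\alpha_1, \alpha_2$ may not be.

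For the forward direction, I assume $P \in 2E(K)$ and pick a triple $(r_1, r_2, r_3)$ provided by Corollary \ref{Crit2}. These three elements are the roots in $\bar K$ of the cubic $F(X) = X^3 - s_1 X^2 + s_2 X + y_0 \in K[X]$. The crucial step is to argue that $r_3 \in K$. Since $r_3^2 = x_0 - \alpha \in K$, the minimal polynomial of $r_3$ over $K$ has degree at most $2$ and divides $F$; if it had degree exactly $2$, it would necessarily be $X^2 - (x_0-\alpha)$, and then $-r_3$ would also be a root of $F$. But the roots of $F$ are $r_1, r_2, r_3$, so $-r_3$ would equal either $r_1$ or $r_2$, forcing $x_0 - \alpha_i = (-r_3)^2 = x_0 - \alpha$ for some $i \in \{1,2\}$, which contradicts the hypothesis that $\alpha$ is not a root of $g$. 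Hence $r_3 \in K$, giving condition (i); then $r_1 + r_2 = s_1 - r_3 \in K$ yields (ii).

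For the converse, set $r_3 = r$ and observe $s_1 = (r_1+r_2) + r \in K$. When $r \neq 0$, the relation $r_1 r_2 r = -y_0$ gives $r_1 r_2 = -y_0/r \in K$; when $r = 0$ one has $y_0 = 0$ and $P = W_3$, but squaring $r_1 + r_2 \in K$ and using $r_i^2 = x_0 - \alpha_i$ still yields $r_1 r_2 = \tfrac{1}{2}((r_1+r_2)^2 - 2x_0 - p) \in K$. Either way $s_2 = r_1 r_2 + r(r_1+r_2) \in K$, so Theorem \ref{div2S} delivers $Q_{r_1,r_2,r_3} \in E(K)$ with $2Q = P$. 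The explicit formulas for $Q_{+} = Q_{r_1,r_2,r}$ and $Q_{-} = Q_{-r_1,-r_2,r}$ then follow by direct substitution into \eqref{bzaaQ}, and the identity $Q_{-} = Q_{+} + W_3$ is exactly the third equation of \eqref{plusW}.

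The principal obstacle is the descent step asserting $r_3 \in K$ in the forward direction. The argument via the minimal polynomial dividing the cubic $F$ is short but depends essentially on the hypothesis that $\alpha$ is distinct from the two roots of $g$, which is precisely what prevents $-r_3$ from being confused with $r_1$ or $r_2$. Without this, having $s_1, s_2 \in K$ alone would not force $r_3 \in K$, and the bridge between Corollary \ref{Crit2} and the explicit condition (i) would collapse.
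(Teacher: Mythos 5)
Your proof is correct, and the one substantive step where the two arguments part ways is the deduction of condition (i) (equivalently, $r_3\in K$) from $P\in 2E(K)$. The paper gets there by a case split: if $g$ is irreducible over $K$ it invokes Theorem \ref{division} (the norm argument in $K_g$), and if $g$ splits over $K$ it falls back on the classical criterion of Theorem \ref{divCLass}; only afterwards does it observe that the $r_3$ supplied by Theorem \ref{div2S} must then lie in $K$. You instead work directly with the triple $(r_1,r_2,r_3)$ and its cubic $F(X)=X^3-s_1X^2+s_2X+y_0\in K[X]$, and run a short minimal-polynomial descent: if $r_3\notin K$ its minimal polynomial would be $X^2-(x_0-\alpha)$, forcing $-r_3$ to be among the roots $r_1,r_2$ of $F$ and hence $\alpha$ to be a root of $g$, a contradiction. (Equivalently, you are reusing the inequality \eqref{ineqR}, $r_i\pm r_j\neq 0$, which encodes the same hypothesis.) This is more uniform and self-contained than the paper's route --- it avoids both Theorem \ref{division} and the reducible/irreducible dichotomy --- at the cost of being slightly less transparent about \emph{why} $x_0-\alpha$ must be a square (the paper's norm computation makes that visible). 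The remaining pieces --- $r_1r_2\in K$ from $(r_1+r_2)^2=2x_0+p+2r_1r_2$ (or from $r_1r_2=-y_0/r$ when $r\neq 0$), hence $s_1,s_2\in K$, the explicit formulas from \eqref{bzaaQ}, and $Q_-=Q_++W_3$ from \eqref{plusW} --- coincide with the paper's. One small citation point: for the final clause that a \emph{given} half $Q\in E(K)$ equals $Q_{r_1,r_2,r}$ for a triple satisfying (ii), you should quote the ``conversely'' part of Theorem \ref{div2S} rather than Corollary \ref{Crit2}, since the corollary only asserts existence of a suitable triple without tying it to the specific $Q$; your descent argument then applies verbatim to that triple.
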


\begin{rem}
\label{not22}
If $x_0\ne \alpha$, i.e., $P\ne (\alpha,0)$,  then $r=\sqrt{x_0-\alpha} \ne 0$ and
$$r_1 r_2 =-\frac{y_0}{r}.$$
It follows from Theorem \ref{divRed} that   $$P=(x_0,y_0)\in E(K)$$ is divisible by 2 in $E(K)$ if and only if there exist square roots
$$r=\sqrt{x_0-\alpha} \in K, \
r_i =\sqrt{x_0-\alpha_i} \in \bar{K} \ (i=1,2)$$
such that
\begin{equation}
\label{Tr2}
r_1 r_2 =-\frac{y_0}{r}, \   \ r_1+r_2 \in K.
\end{equation}
\end{rem}

\begin{proof}[Proof of Theorem \ref{divRed}]
Let us put
$$\alpha_3=\alpha, \ W_3=(\alpha_3,0)\in E(K).$$
Suppose that conditions (i)-(ii) hold. Let us put
$$r_3=r \in K.$$
We have
$$\begin{aligned}(r_1+r_2)^2=r_1^2+2 r_1 r_2+r_2^2=(x_0-\alpha_1)+2 r_1 r_2+(x_0-\alpha_2)\\=2x_0-(\alpha_1+\alpha_2)+2r_1 r_2=(2 x_0+p)+2 r_1 r_2.\end{aligned}$$
Since all three $r_1+r_2, x_0,p$ lie in $K$, we obtain that
$$r_1 r_2 \in K.$$
Notice that
\begin{equation}
\label{symmetric}
s_1=r_1+r_2+r_3=(r_1+r_2)+r, \ s_2=r_1 r_2+r_2 r_3+r_3 r_1=(r_1+r_2)r+r_1 r_2.
\end{equation}
Clearly, both $s_1$ and $s_2$ lie in $K$. Let us put
\begin{equation}
\label{explicitS}
x_1=x_0+r_1 r_2+r_2 r_3+r_3 r_1=x_0+s_2, \end{equation}
$$y_1=-y_0-(r_1+r_2+r_3)(r_1 r_2+r_2 r_3+r_3 r_1)=-y_0-s_1 s_2.$$
Since all $x_0,y_0, s_1, s_2$ lie in $K$, both $x_1$ and $y_1$ also lie in $K$.
By formula \eqref{bzaaQ}, the point $Q_{+}=Q_{r_1,r_2,r_3}=(x_1,y_1)$ lies in $E(\bar{K})$ and satisfies $2Q_{+}=P$.
Since $x_1$ and $y_1$ lie in $K$, the point $Q_{+}$ actually lies in $E(K)$, and therefore $P$ is divisible by 2 in $E(K)$.
Similarly, the triple of square roots $(-r_1,-r_2,r_3)$  satisfies \eqref{bzaaR} and \eqref{bzaaY}; in addition,
$$(-r_1)+(-r_2)=-(r_1+r_2)\in K, (-r_1)(-r_2)-r_1 r_2.$$
This implies that
$$Q_{-}=Q_{-r_1,-r_2,r_3}=Q_{r_1,r_2,r_3}+W_3=Q_{+}+W_3$$
 also lies in $E(K)$ and satisfies $2Q_{-}=P$.

Conversely, suppose that $P=2Q$ with $Q=(x_1,y_1) \in E(K)$. We claim that
$x_0-\alpha_3=x_0-\alpha$ is a square in $K$. Indeed, if $g(x)$ is irreducible over $K$, then
our claim follows from Theorem \ref{division}. If $g(x)$ is reducible, i.e., $\alpha_1, \alpha_2$ and $\alpha_3$
lie in $K$, then our claim follows from Theorem \ref{divCLass}.

It follows from Theorem \ref{div2S} that there exist square roots
$r_i=\sqrt{x_0-\alpha_i}$  such that
$$r_1 r_2 r_3=-y_0, r_1+r_2+r_3  \in K, r_1 r_2+r_2 r_1+r_3 r_1 \in K$$
and
$$Q=(x_1,y_1)=Q_{r_1,r_2,r_3}.$$
Notice that
$$r:=r_3=\sqrt{x_0-\alpha_3}=\sqrt{x_0-\alpha}\in K$$
and therefore
$$r_1 r_2 r=r_1 r_2 r_3=-y_0, \ Q=Q_{r_1,r_2,r_3}=Q_{r_1,r_2,r}.$$
Since $r_3=r \in K$, we conclude that
$r_1+r_2=(r_1+r_2+r_3)-r_3\in K$.
\end{proof}
\begin{rem}
\label{doubleQ}
Let $Q =(x_1,y_1) \in E(K)$ with $y_1 \ne 0$ (i.e., $Q$ is none of $W_j$) and $P=2Q \in E(K)$.  Then $Q=Q_{r_1,r_2,r_3}$
for suitable $r_k=\sqrt{x(P)-\alpha_k}$,
and formulas \eqref{QtoR} (with $i=3$) tell us that
$$\begin{aligned}r_3=-\frac{y_1}{2}\cdot \left(-\frac{1}{x_1-\alpha}+\frac{1}{x_1-\alpha_1}+\frac{1}{x_1-\alpha_2}\right)\\=
 -\frac{y_1}{2}\cdot \left(-\frac{1}{x_1-\alpha}+\frac{2x_1+p}{g(x_1)}\right).\end{aligned}$$
In particular, if $x_1=0$ and $\alpha=-1$, then $y_1^2=q$, $g(x_1)=q$  and
$$r_3^2= \frac{q}{4}\cdot  \left(-1+\frac{p}{q}\right)^2=  \frac{(p-q)^2}{4q}$$
and
\begin{equation}
\label{doubleX}
x(P)=x(2Q)=\alpha+r_3^2= \frac{(p-q)^2}{4q}-1.
\end{equation}
\end{rem}
\begin{thm}
\label{divideS}
We keep the notation and assumptions of Theorem \ref{divRed}.
Assume additionally that $x_0 \ne \alpha$, i.e., $P \ne (\alpha,0)$.
Then $P=(x_0,y_0)$ is divisible by $2$ in $E(K)$ if and only if there exists $r \in K$ such that $r^2=x_0-\alpha$ and
$(2x_0+p)(x_0-\alpha)-2y_0 r$
is a  square in $K$.

In addition, if these equivalent conditions hold and we choose
$$T=\frac{\sqrt{(2x_0+p)(x_0-\alpha)-2y_0 r}}{r}\in K,$$
then
$$\mathcal{Q}_{r,T}=(x_1,y_1):= \left(x_0+rT-\frac{y_0}{r}, -y_0-\left(r+T\right)\left(rT +\frac{y_0}{r}\right)\right)$$
and
$$\mathcal{Q}_{r,-T}=(x_{-1},y_{-1}):=\left(x_0-rT-\frac{y_0}{r}, -y_0-\left(r-T\right)\left(-rT  +\frac{y_0}{r}\right)\right)$$
are distinct points of $E(K)$, which satisfy
$$2 \mathcal{Q}_{r,T}=2 \mathcal{Q}_{r,-T}=P \in E(K), \   \mathcal{Q}_{r,-T}= \mathcal{Q}_{r,T}+W_3.$$
Conversely, if $Q\in E(Q)$ satisfies $2Q=P$, then there exist
$$r=\sqrt{x_0-\alpha}\in K \ \text{ and }\ T=\frac{\sqrt{(2x_0+p)(x_0-\alpha)-2y_0 r}}{r}\in K$$
such that $Q=\mathcal{Q}_{r,T}$.
\end{thm}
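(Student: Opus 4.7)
The plan is to reduce everything to Theorem \ref{divRed} (or equivalently Remark \ref{not22}) by eliminating $r_1, r_2$ in favor of a single auxiliary quantity $T \in K$. Since $x_0 \ne \alpha$ we have $r = \sqrt{x_0-\alpha} \ne 0$, so by Remark \ref{not22} the point $P$ is divisible by $2$ in $E(K)$ if and only if there exist $r \in K$ with $r^2 = x_0 - \alpha$ and $r_i = \sqrt{x_0-\alpha_i} \in \bar K$ ($i=1,2$) satisfying
$$r_1 r_2 = -\tfrac{y_0}{r}, \qquad r_1 + r_2 \in K.$$

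The first step is to compute $(r_1+r_2)^2$ purely in terms of $K$-data. Using Vieta's relation $\alpha_1+\alpha_2 = -p$ from \eqref{Vieta}, I get
$$(r_1+r_2)^2 = (x_0-\alpha_1) + (x_0-\alpha_2) + 2r_1 r_2 = (2x_0 + p) - \tfrac{2 y_0}{r}.$$
Multiplying by $r^2 = x_0-\alpha$ yields the key identity
$$\bigl(r(r_1+r_2)\bigr)^2 = (2x_0+p)(x_0-\alpha) - 2 y_0 r.$$
Since $r \in K^\times$, the condition $r_1+r_2 \in K$ is equivalent to $r(r_1+r_2) \in K$, which in turn is equivalent to $(2x_0+p)(x_0-\alpha) - 2 y_0 r$ being a square in $K$. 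This establishes the criterion.

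Next, assuming the criterion holds, I set
$$T := \frac{\sqrt{(2x_0+p)(x_0-\alpha) - 2 y_0 r}}{r} \in K,$$
so that $T = \pm(r_1+r_2)$ for one of the two sign choices of $(r_1,r_2)$ compatible with $r_1 r_2 = -y_0/r$. Plugging $r_1+r_2 = T$ and $r_1 r_2 = -y_0/r$ into the formulas for $Q_{+} = Q_{r_1,r_2,r}$ and $Q_{-} = Q_{-r_1,-r_2,r}$ provided by Theorem \ref{divRed}, I recover exactly the points $\mathcal{Q}_{r,T}$ and $\mathcal{Q}_{r,-T}$ in the statement, together with the relations $2\mathcal{Q}_{r,\pm T} = P$ and $\mathcal{Q}_{r,-T} = \mathcal{Q}_{r,T} + W_3$. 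The converse is immediate: if $Q \in E(K)$ halves $P$, Theorem \ref{divRed} produces square roots $r, r_1, r_2$ with the required properties, and then $T := r_1+r_2$ lies in $K$ and satisfies $Q = \mathcal{Q}_{r,T}$ by the same substitution.

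I expect no serious obstacle. The only subtle point is the sign bookkeeping: the sign of $T$ is genuinely ambiguous (the two signs correspond to $Q_{+}$ and $Q_{-} = Q_{+}+W_3$), so the existence clause speaks about a choice of square root for $(2x_0+p)(x_0-\alpha) - 2y_0 r$ rather than a canonical one. Once one notes that flipping the sign of $T$ corresponds exactly to replacing $(r_1,r_2)$ by $(-r_1,-r_2)$ in Theorem \ref{divRed}, which is the operation that interchanges the two halves of $P$, everything lines up.
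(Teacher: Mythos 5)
Your argument is correct and follows essentially the same route as the paper's own proof: both reduce the divisibility question to Remark \ref{not22}, compute $(r_1+r_2)^2 = (2x_0+p)-2y_0/r = \bigl((2x_0+p)(x_0-\alpha)-2y_0r\bigr)/r^2$ to translate ``$r_1+r_2\in K$'' into the squareness of $(2x_0+p)(x_0-\alpha)-2y_0r$, and then identify $T=\pm(r_1+r_2)$ so that substituting $r_1+r_2=T$ and $r_1r_2=-y_0/r$ into the points $Q_{\pm}$ of Theorem \ref{divRed} yields $\mathcal{Q}_{r,\pm T}$. The only difference is cosmetic: you clear denominators and phrase the criterion via $r(r_1+r_2)\in K$ rather than dividing by $r^2$, which changes nothing.
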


\begin{proof}
In light of Remark \ref{not22}, the divisibility of $P$ by 2 in $E(K)$ is equivalent to the existence of square roots
$r=\sqrt{x_0-\alpha}\in K$ and $r_i =\sqrt{x_0-\alpha_i}\in \bar{K}$ ($i=1,2$) such that
$$r_1 r_2 =-\frac{y_0}{r},   \ r_1+r_2 \in K.$$
Suppose such $ r_1, r_2, r$ exist. Then $(r_1+r_2)^2$ is a square in $K$. We have
\begin{equation}
\label{r1PLUSr2S}
\begin{aligned}
(r_1+r_2)^2=r_1^2+2 r_1 r_2+ r_2^2=(x_0-\alpha_1)-2\frac{y_0}{r}+(x_0-\alpha_2)\\=
2x_0+p-2\frac{y_0}{r}=\frac{(2x_0+p)(x_0-\alpha)-2y_0 r}{r^2}.\end{aligned}\end{equation}
This implies that $(2x_0+p)(x_0-\alpha)-2y_0 r$ is a {\sl square} in $K$.

Conversely, suppose that there exists $r\in K$ such that $r^2=x_0-\alpha$ and $(2x_0+p)(x_0-\alpha)-2y_0 r$ is a {\sl square} in $K$.
This implies that
$$\frac{(2x_0+p)(x_0-\alpha)-2y_0 r}{r^2}=\frac{(2x_0+p)(x_0-\alpha)-2y_0 r}{x_0-\alpha}=2x_0+p-\frac{2 y_0}{r}$$
is a square in $K$. Let us put $r_3=r$ and choose square roots $r_i=\sqrt{x_0-\alpha_i} \in \bar{K}$ ($i=1,2$) in such a way that
$r_1 r_2 r_3=-y_0$.  (Since $r_3=r \ne 0$, the only other choice of such a pair of square roots is $(-r_1,-r_2)$.)  Let
$$T:=r_1+r_2.$$
We have
$$r_1 r_2=-\frac{y_0}{r}\in K$$ and
$$T^2=(r_1+r_2)^2=r_1^2+2 r_1 r_2+ r_2^2=(x_0-\alpha_1)-2\frac{y_0}{r}+(x_0-\alpha_2)=$$
$$2x_0+p-2\frac{y_0}{r}=\frac{(2x_0+p)(x_0-\alpha)-2y_0 r}{r^2}$$
is a {\sl square} in $K$. This implies that
$$T=r_1+r_2 \in K.$$
It follows from Remark \ref{not22} that $P$ is divisible by 2 in $E(K)$. By Theorem \ref{divRed} we obtain that
$$Q_{+}=Q_{r_1,r_2,r}=(x_0+(r_1+r_2)r+r_1 r_2, -y_0-((r_1+r_2)+r)((r_1+r_2)r+r_1 r_2)$$
and
$$Q_{-}=Q_{-r_1,-r_2,r}=(x_0-(r_1+r_2)r+r_1 r_2, -y_0-(-(r_1+r_2)+r)(-(r_1+r_2)r+r_1 r_2)$$
are distinct points of  $E(K)$, which satisfy
 $$2Q_{+}=2 Q_{-}=P,  \ Q_{-}=Q_{+}+W_3.$$
 Taking into account that
 $$r_1+r_2=T, \ r_1 r_2=-\frac{y_0}{r},$$
 we obtain that
 $$\mathcal{Q}_{r,T}=Q_{+}, \ \mathcal{Q}_{r,-T}=Q_{-},$$
 and therefore
 $$\mathcal{Q}_{r,T}, \mathcal{Q}_{r,-T}\in E(K), \ \mathcal{Q}_{r,-T}=\mathcal{Q}_{r,T}+W_3,$$
 $$2 \mathcal{Q}_{r,T}=2 \mathcal{Q}_{r,-T}=P.$$
On the other hand, if $P=2Q$ with $Q\in E(K)$, then it follows from Theorem \ref{divRed} that there exist square roots $r=\sqrt{x_0-\alpha}\in K$ and
$$r_i =\sqrt{x_0-\alpha_i} \in \bar{K} \ (i=1,2)$$
such that
$$r_1 r_2 r=-y_0,   \ r_1+r_2 \in K$$ and $Q=Q_{r_1,r_2,r}$.  This implies that $r_1 r_2=-y_0/r$. Now if we put $T=(r_1+r_2)\in K$, then (as we have already seen)
$$T^2=(r_1+r_2)^2=(2x_0+p)(x_0-\alpha)-2y_0 r\in K.$$
 Now one can easily check that
$$Q_{r_1,r_2,r}=(x_0+(r_1+r_2)r+r_1 r_2, -y_0-((r_1+r_2)+r)((r_1+r_2)r+r_1 r_2)=\mathcal{Q}_{r,T},$$
and therefore $Q=\mathcal{Q}_{r,T}$.

\end{proof}

\begin{rem}
\label{nonvan}
It is known \eqref{ineqR} that  $r_1+r_2 \ne 0$. In light of \eqref{r1PLUSr2S},
$$(2x_0+p)(x_0-\alpha)-2y_0 r \ne 0.$$
\end{rem}

\begin{thm}
\label{divRed2}
Let $\alpha$ be an element of $K$ that is not a root of $g(x)$. Let us consider the elliptic curve
$$E: y^2=g(x)(x-\alpha)$$
over $K$. Let $y_0$ be a nonzero element of $K$.
Then
$$P=(0,y_0) \in 2 E(K)\subset E(K)$$
if and only if there exists $r \in K$ and a nonzero $T \in K$ such that 
\begin{equation}
\label{rSquareP}
r^2=-\alpha, \ g(x)=x^2+\left(T^2+2\frac{y_0}{r}\right)x+\left(\frac{y_0}{r}\right)^2.
\end{equation}
If  \eqref{rSquareP} holds, then the equation for $E$ becomes
\begin{equation}
\label{ellTr}
y^2=(x+r^2)\left(x^2+\left(T^2+2\frac{y_0}{r}\right)x+\left(\frac{y_0}{r}\right)^2\right).
\end{equation}
In addition,
$$\mathcal{Q}_{r,T}=\left(r T-\frac{y_0}{r}, -y_0-(r+T)\left(r T-\frac{y_0}{r}\right)\right)$$
  and
 $$\mathcal{Q}_{r,-T}=\left(-r T-\frac{y_0}{r}, -y_0-(r-T)\left(-r T-\frac{y_0}{r}\right)\right)$$
are distinct points of  $E(K)$ that satisfy
$$2 \mathcal{Q}_{r,T}=2 \mathcal{Q}_{r,-T}=P,   \  \mathcal{Q}_{r,-T}=\mathcal{Q}_{r,T}+W_3 .$$
Moreover, if $Q\in E(K)$ satisfies $2Q=P$, then there exist
$$r\in K \ \text{ and }\ T\in K$$
that enjoy   property  \eqref{rSquareP} and
 $Q=\mathcal{Q}_{r,T}$.
\end{thm}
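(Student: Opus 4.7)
This result is the specialization of Theorem \ref{divideS} to the point $P=(0,y_0)$, rewritten in terms of the coefficients $p,q$ of $g(x)$ rather than of its roots $\alpha_1,\alpha_2$. The approach is therefore to apply Theorem \ref{divideS} verbatim with $x_0=0$ and then translate each of the two ``square in $K$'' conditions into the asserted form \eqref{rSquareP}.

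First I will apply Theorem \ref{divideS} with $x_0=0$. Since $y_0\neq 0$ and $y_0^2=f(0)=-\alpha q$, both $\alpha$ and $q$ are nonzero. The condition ``$x_0-\alpha$ is a square in $K$'' then reads ``there exists $r\in K$ with $r^2=-\alpha$'', and any such $r$ is automatically nonzero. The condition ``$(2x_0+p)(x_0-\alpha)-2y_0 r$ is a square in $K$'' becomes ``$r^2(p-2y_0/r)$ is a square in $K$'', which, because $r^2$ is a nonzero square, is equivalent to the existence of $T\in K$ with $T^2=p-2y_0/r$, i.e.\ $p=T^2+2y_0/r$. The requirement $T\neq 0$ is delivered by Remark \ref{nonvan}, which in our case gives $r^2 T^2=(2x_0+p)(x_0-\alpha)-2y_0 r\neq 0$.

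Next I will read off the full shape of $g(x)$. From $y_0^2=-\alpha q$ and $r^2=-\alpha$ I get $q=(y_0/r)^2$, which together with $p=T^2+2y_0/r$ gives the second half of \eqref{rSquareP} and hence the explicit equation \eqref{ellTr} for $E$. Finally, the formulas for $\mathcal{Q}_{r,T}$ and $\mathcal{Q}_{r,-T}$, the identities $2\mathcal{Q}_{r,T}=2\mathcal{Q}_{r,-T}=P$ and $\mathcal{Q}_{r,-T}=\mathcal{Q}_{r,T}+W_3$, and the converse statement that every $Q\in E(K)$ with $2Q=P$ coincides with some $\mathcal{Q}_{r,T}$, all follow by directly substituting $x_0=0$ (and using $r_1 r_2=-y_0/r$, $r_1+r_2=T$) into the corresponding formulas and statements of Theorem \ref{divideS}. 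No step presents a serious obstacle: the entire argument is essentially a rewriting of Theorem \ref{divideS} in coefficient form, specialized to $P=(0,y_0)$.
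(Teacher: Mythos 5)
Your proposal is correct and follows essentially the same route as the paper: both specialize Theorem \ref{divideS} to $x_0=0$, convert the square conditions into $r^2=-\alpha$ and $p=T^2+2y_0/r$, deduce $q=(y_0/r)^2$ from $y_0^2=-\alpha q$, and defer the explicit halves and the converse to the corresponding assertions of Theorem \ref{divideS}. Your explicit appeal to Remark \ref{nonvan} for $T\neq 0$ is a slight tightening of a point the paper leaves implicit.
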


\begin{proof}
We have the equation for $E$
$$y^2=(x^2+px+q)(x-\alpha).$$
Since $P=(x_0,y_0)=(0,y_0) \in E(K)$, we have
$ y_0^2=q\cdot (-\alpha),$
which means that
$$q=\frac{y_0^2}{-\alpha}.$$
In light of Theorem \ref{divideS}, the inclusion $P\in 2 E(K)$ is equivalent to the existence of nonzero $r,T \in K$ such that
$$r^2=-\alpha, \ T^2=\frac{(2x_0+p)(x_0-\alpha)-2y_0 r}{r^2}=\frac{p r^2-2 y_0 r}{r^2}=p- 2 \frac{y_0}{r}.$$
This means that
$$q=\left(\frac{y_0}{r}\right)^2, \    p=T^2+2 \frac{y_0}{r},$$
i.e.,
$$g(x)=x^2+px+q=x^2+\left(T^2+2 \frac{y_0}{r}\right)x+\left(\frac{y_0}{r}\right)^2.$$

The  assertions about $\mathcal{Q}_{r,T}$,  $\mathcal{Q}_{r,-T}$, and $Q=\frac{1}{2}P$ follow from the corresponding assertions of Theorem \ref{divideS}.
\end{proof}

\begin{rem}
\label{req1}
Dividing both sides of \eqref{ellTr} by $r^6$ and making the substitution
$$\tilde{x}=\frac{x}{r^2}, \tilde{y}=\frac{y}{r^3}, t=\frac{T}{r}, \tilde{y}_0=\frac{y_0}{r^3},$$
we obtain that $E$ is isomorphic to the elliptic curve
\begin{equation}
\label{ellt1}
\tilde{E}_{t,\tilde{y}_0}:\tilde{y}^2=(\tilde{x}+1)\left(\tilde{x}^2+\left(t^2+2\tilde{y}_0\right)\tilde{x}+\tilde{y}_0^2\right).
\end{equation}
In addition, the isomorphism
$$E \cong \tilde{E}_{t,\tilde{y}_0}: \ (x,y) \mapsto (\tilde{x},\tilde{y})=\left(\frac{x}{r^2}, \frac{y}{r^3}\right)$$
sends $W_3=(-r^2,0)\in E(K)$ to $\tilde{W}_3=(-1,0) \in \tilde{E}_{t,\tilde{y}_0}(K)$,  $P$ to $\tilde{P}=(0, \tilde{y}_0)\in \tilde{E}_{t,\tilde{y}_0}(K)$,
$\mathcal{Q}_{r,T}$ to $\mathcal{Q}_{1,t}=\left(t-\tilde{y}_0, -\tilde{y}_0-(1+t)\left(t-\tilde{y}_0\right)\right)$, and
$\mathcal{Q}_{r,-T}$ to $\mathcal{Q}_{1,-t}=\left(-t-\tilde{y}_0, -\tilde{y}_0-(1-t)\left(-t-\tilde{y}_0\right)\right)$.
We also have
$$2 \mathcal{Q}_{1,t}=2 \mathcal{Q}_{1,-t}=\tilde{P}, \ \mathcal{Q}_{1,-t}=\mathcal{Q}_{1,t}+\tilde{W}_3.$$
\end{rem}

\section{Elliptic curves with points of order 4}

Let $E$ be an elliptic curve over a field $K$ with $\fchar(K)\ne 2$. Suppose that $E(K)$ contains exactly one point of order 2.
Then $E$ is $K$-isomorphic to
$$\mathcal{E}_{0,p,q}: y^2=x(x^2+px+q),$$
where $x^2+px+q\in K[x]$ is an irreducible quadratic polynomial.

\begin{thm}
\label{order44}
The following conditions are equivalent.
\begin{itemize}
\item[(i)]
$E(K)$ contains a cyclic subgroup of order $4$.
\item[(ii)]
There exist nonzero $a,b \in K$ such that $a^2+4b$ is not a square in $K$ and $E$ is $K$-isomorphic to the elliptic curve
$$\mathcal{E}^{(4)}_{a,b}: y^2=x\left(x^2+(a^2+2b)x+b^2\right).$$
\end{itemize}
\end{thm}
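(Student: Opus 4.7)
The plan is to recognize that, under the standing hypothesis that $E(K)$ has exactly one point of order $2$, the content of the statement has essentially already been extracted in Example \ref{ex4}. I would organize the argument around the observation that ``cyclic subgroup of order $4$'' is the same as ``a point of order $4$'', and that such a point must double to the unique order-$2$ point.

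First I would reduce to a clean divisibility statement. Since $E(K)$ has exactly one point of order $2$, we may (and do) write $E = \mathcal{E}_{0,p,q}\colon y^2 = x(x^2+px+q)$ with $g(x) = x^2+px+q \in K[x]$ an irreducible quadratic (by Remark \ref{onePT}), so that $W_3 = (0,0)$ is the unique $K$-point of order $2$. If $Q \in E(K)$ has order $4$, then $2Q \in E(K)$ has order $2$, forcing $2Q = W_3$. Conversely, any $Q \in E(K)$ with $2Q = W_3$ has order dividing $4$ but not $2$, so has order $4$. Therefore (i) is equivalent to $W_3 \in 2E(K)$.

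Next I would apply Theorem \ref{division} with $\alpha = 0$ and $(x_0,y_0) = (0,0)$: the condition $W_3 \in 2E(K)$ is equivalent to $-\mathbf{x}$ being a square in $K_g$. At this point Example \ref{ex4} is exactly the algebraic analysis of this condition: it shows that $-\mathbf{x}$ is a square in $K_g$ if and only if there exist $a \in K^\times$ and $b \in K$ with $p = a^2+2b$, $q = b^2$, i.e.
\[
g(x) = x^2+(a^2+2b)x+b^2,
\]
and, $g$ being irreducible, its discriminant $a^2(a^2+4b)$ is a non-square in $K$, equivalently $a^2+4b$ is a non-square; moreover $b \ne 0$ since $q \ne 0$. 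This proves (i) $\Rightarrow$ (ii).

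For the converse (ii) $\Rightarrow$ (i), I would start from the curve $\mathcal{E}^{(4)}_{a,b}$ with $a,b \in K^\times$ and $a^2+4b$ not a square. The non-squareness of $a^2+4b$ implies that the discriminant $a^2(a^2+4b)$ of $g(x) = x^2+(a^2+2b)x+b^2$ is not a square in $K$, hence $g$ is irreducible, and then in the corresponding $K_g$ one verifies directly that $\left(\mathbf{x}/a + b/a\right)^2 = -\mathbf{x}$ (this is the computation already carried out in Example \ref{ex4}). Thus $-\mathbf{x}$ is a square in $K_g$, and by Theorem \ref{division} $(0,0) \in 2\mathcal{E}^{(4)}_{a,b}(K)$; the explicit halves $(-b, \pm ab)$ of $(0,0)$ computed at the end of Example \ref{ex4} then give the required points of order $4$ in $\mathcal{E}^{(4)}_{a,b}(K)$.

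There is no real obstacle here: the bulk of the work, namely the characterization of when $-\mathbf{x}$ is a square in $K_g$, is already carried out in Example \ref{ex4}. The only things to be careful about are the bookkeeping items noted above: that a cyclic subgroup of order $4$ is the same as a point of order $4$, that such a point doubles to the unique order-$2$ point $(0,0)$, and that the non-squareness of $a^2+4b$ is precisely the condition ensuring $g$ is irreducible (so that the setting of Theorem \ref{division} applies).
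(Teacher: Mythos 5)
Your proposal is correct and follows essentially the same route as the paper: reduce (i) to the divisibility of the unique order-$2$ point $(0,0)$ by $2$ in $E(K)$, translate that via Theorem \ref{division} into $-\mathbf{x}$ being a square in $K_g$, and invoke the computation of Example \ref{ex4} in both directions (including the explicit halves $(-b,\pm ab)$ for the converse). You merely make explicit some bookkeeping the paper leaves implicit, such as the fact that a point of order $4$ must double to the unique point of order $2$.
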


\begin{proof}
Suppose (i) holds. We may assume that $E=\mathcal{E}_{0,p,q}$.
It follows from Example \ref{ex4} that there are nonzero $a,b \in K$ such that $a^2+4b$ is not a square in $K$,
$p=a^2+2b$, and $q=b^2.$
This means that $E=\mathcal{E}^{(4)}_{a,b}$, i.e., (ii) holds.

Suppose (ii) holds. We may assume that
$$E=\mathcal{E}^{(4)}_{a,b}: y^2=x\left(x^2+(a^2+2b)x+b^2\right)$$
with nonzero
$a,b \in K$
and   $a^2+4b$ is  {\sl not} a square in $K$. (In particular, $a^2+4b \ne 0$.)
Clearly, $W=(0,0)$ is a point of order $2$ in $E(K)$ and
$P=(-b, ab) \in E(K)$.  If we put $g(x)=x^2+(a^2+2b)x+b^2$, then arguments of Example \ref{ex4}
show that $W$ is divisible by $2$ in $\mathcal{E}^{(4)}_{a,b}(K)$, and therefore $\mathcal{E}^{(4)}_{a,b}(K)$ contains a point of order $4$.
In fact, it contains exactly two points of order 4, namely, $(-b,ab)$ and $(-b,-ab)$.
\end{proof}

\begin{rem}
D. Kubert \cite{Kubert} described another family of elliptic curves
$$\mathcal{E}_{4,t}: y^2+xy-ty=x^3-tx^2$$
with point $Q=(0,0)$ of  order $4$.
The equation for $\mathcal{E}_{4,t}$ is equivalent to
$$ y^2+2y\frac{(x-t)}{2}+\frac{(x-t)^2}{4}=(x-t)x^2+\frac{(x-t)^2}{4}.$$
Now the change of variables
$$\tilde x =x-t, \ \tilde y=y+\frac{(x-t)}{2}$$
allows us to rewrite the equation as follows:
$$\tilde y^2=\tilde x \left((\tilde x+t)^2+\frac{\tilde x }{4}\right)=\tilde x\left(\tilde x^2+(2t+\frac{1}{4})\tilde x+t^2\right).$$
This implies that $\mathcal{E}_{4,t}$ is isomorphic to $\mathcal{E}_{a,b}$ with $b=t, a =1/2$.
\end{rem}

\begin{thm}\label{iso4}
Let $K$ be a field with $\fchar(K)\ne 2$, and let
 $a,b$ be nonzero elements of $K$ such that $a^2+4b$ is not a square in $K$. (In particular, $a^2+4b\ne 0$.)
Let $c,d$ be nonzero elements of $K$ such that $c^2+4d \ne 0$. Then the elliptic curves $\mathcal{E}_{a,b}$ and $\mathcal{E}_{c,d}$
are $K$-isomorphic if and only if there exists a nonzero $u \in K$ such that
$$c^2+2d=u^2(a^2+2b), \ u^4 d^2=b^2.$$
\end{thm}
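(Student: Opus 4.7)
The plan is to exploit the fact that, under the hypothesis that $a^2+4b$ is not a square in $K$, the curve $\mathcal{E}_{a,b}$ has a \emph{unique} nontrivial $K$-rational point of order $2$, namely $W_0=(0,0)$. Any $K$-isomorphism $\mathcal{E}_{a,b}\to\mathcal{E}_{c,d}$ must preserve $W_0$, and this rigidity will cut the isomorphism down to a pure rescaling whose effect on the coefficients reads off as the two stated relations.

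For the ``only if'' direction, I would first observe that the quadratic factor of $\mathcal{E}_{a,b}$ has discriminant $(a^2+2b)^2-4b^2=a^2(a^2+4b)$, which is not a square in $K$, so $(0,0)$ is indeed the only nontrivial point of order $2$ in $\mathcal{E}_{a,b}(K)$. Since the order of the rational $2$-torsion is a $K$-isomorphism invariant, the existence of a $K$-isomorphism $\phi\colon\mathcal{E}_{a,b}\xrightarrow{\sim}\mathcal{E}_{c,d}$ forces $c^2+4d$ also to be a non-square in $K$, so that $\mathcal{E}_{c,d}(K)$ likewise has only $(0,0)$ as its nontrivial $2$-torsion point. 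In characteristic $\neq 2$ any $K$-isomorphism between two Weierstrass models of the form $y^2=(\text{monic cubic in }x)$ has the shape $(x,y)\mapsto(u^2 x+r,\,u^3 y)$ with $u\in K^\times$ and $r\in K$; from $\phi(0,0)=(0,0)$ I conclude $r=0$. Substituting $\phi(x,y)=(u^2 x,\,u^3 y)$ into the equation for $\mathcal{E}_{c,d}$, dividing by $u^6$, and comparing with the equation for $\mathcal{E}_{a,b}$ coefficient by coefficient in the resulting cubic then yields the two stated relations (possibly after replacing $u$ by $u^{-1}$, which corresponds to using $\phi^{-1}$ in place of $\phi$).

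The converse is a direct substitution: given a nonzero $u\in K$ satisfying the two stated relations, the map $(x,y)\mapsto(u^2 x,\,u^3 y)$ (or its inverse, depending on how the first relation is oriented) is manifestly a $K$-isomorphism taking one curve onto the other. The only mildly delicate step in the argument is the reduction $r=0$, which rests on first deducing from the existence of the isomorphism that $c^2+4d$ is not a square in $K$; once that is in place, everything reduces to a routine comparison of coefficients.
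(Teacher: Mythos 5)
Your proof is correct and follows essentially the same route as the paper: both arguments use the uniqueness of the rational $2$-torsion point to force the isomorphism to fix $(0,0)$, reduce it to the form $(x,y)\mapsto(u^2x,\,u^3y)$ (the paper quotes Tate's general change-of-variables formula and kills the extra shear term by compatibility with $[-1]$, while you invoke the standard normal form for isomorphisms of $y^2=\text{cubic}$ models in characteristic $\neq 2$ --- an equivalent step), and then compare coefficients. One small remark: the direct coefficient comparison gives $u^4b^2=d^2$ rather than $u^4d^2=b^2$; the paper's own proof derives exactly the same relation, so this mismatch with the displayed statement is a typo in the theorem rather than a gap in your argument, and your hedge about replacing $u$ by $u^{-1}$ does not by itself reconcile it.
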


\begin{proof}
Since  $a^2+4b$ is not a square in $K$, the quadratic polynomial $x^2+(a^2+2b)x+b^2$ has {\sl no} roots in $K$.
This implies that $\mathcal{E}_{a,b}$ has exactly one point of order $2$, namely, $W_{a,b}=(0,0)$.
Let $\phi:\mathcal{E}_{a,b}\cong \mathcal{E}_{c,d}$
be a $K$-isomorphism. Since $W_{c,d}=(0,0)$ is a point of order $2$ in $\mathcal{E}_{c,d}(K)$, it is the only point of order $2$ in
 $\mathcal{E}_{c,d}(K)$ and $\phi$ sends $W_{a,b}=(0,0)\in \mathcal{E}_{a,b}$ to $W_{c,d}=(0,0)\in \mathcal{E}_{c,d}$. Let
 $${\tilde y}^2={\tilde x }\left({\tilde x }^2+(c^2+2d){\tilde x }+d^2\right)$$
be the equation of  $\mathcal{E}_{c,d}$ in coordinates ${\tilde x},{\tilde y}$.
 According to \cite[Sect. 1, p. 301, formula (1.13)]{Tate}, there exist a nonzero $u\in K$ and $\alpha,\beta,\gamma \in K$ such that the following conditions hold:
$$
{\tilde x }\circ\phi=u^2 x+\alpha, \ {\tilde y }\circ\phi=u^3 y+\beta u^2 x+\gamma .$$
Since $\phi$ sends $(0,0) \in \mathcal{E}_{a,b}(K)$ to $(0,0) \in \mathcal{E}_{c,d}(K)$, we have $\alpha=0=\gamma$.
Since $\phi$ commutes with the multiplication by $-1$,  we conclude that $\beta=0$, i.e.,
$\phi$ is defined by the formulas
\begin{equation}
\label{phiTate}
{\tilde x }\circ\phi=u^2 x, \ {\tilde y }\circ\phi=u^3 y.
\end{equation}
Now \cite[Sect. 1, p. 301, formula (1.14)]{Tate} implies that
\begin{equation}
\label{abcdu}
u^2 (a^2+2b)=(c^2+2d), \ u^4 b^2=d^2.
\end{equation}
This proves our assertion in one direction. In order to prove it in the opposite direction, let us assume  that \eqref{abcdu} holds. Then the formulas \eqref{phiTate} define an isomorphism
$\phi: \mathcal{E}_{a,b}\cong \mathcal{E}_{c,d}$.
\end{proof}

\begin{rem}
Let us look more closely at   equalities \eqref{abcdu}. They may be rewritten as follows:
$$u^2 (a^2+2b)=(c^2+2d), \ d = \pm u^2 b.$$
It follows that
$u^2 (a^2+2b)=(c^2\pm 2 u^2 b)$, i.e., either
$$d=u^2 b, \ u^2 (a^2+2b)=(c^2+ 2u^2 b),$$
and so
$$u^2 a^2=c^2, \ c=\pm ua,$$ or
$$d=-u^2 b, \ u^2 (a^2+2b)=(c^2-2u^2 b),$$
and therefore
$$u^2 (a^2+4b)=c^2;$$
in the latter case $a^2+4b$ is a square in $K$, which is not the case. So, we have
$$d=u^2 b, \ dc=\pm ua,$$
 and therefore $u=\pm c/a$, which implies that
$$b=\frac{d a^2}{c^2}.$$
We have
$$(c^2+2d)=\left(\pm \frac{c}{a}\right)^2\left(a^2+2\cdot \frac{d a^2}{c^2}\right)u^2= u^2 (a^2+2b).$$
It follows that if $a$ is a fixed nonzero element of $K$ and $E$ is an elliptic curve over $K$ such that $E(K)$ contains  a point of order $4$ and exactly one point of order $2$, then there exists exactly one (nonzero) $b \in K$ such that $E$ is $K$-isomorphic to $\mathcal{E}_{a,b}$. In particular, if we take $a=1/2$, then we get that there is exactly one $t\in K$ such that  $E$ is $K$-isomorphic to $\mathcal{E}_{4,t}$.
\end{rem}
\section{Elliptic curves with points of order 8}
Let $E$ be an elliptic curve over a field $K$ with $\fchar(K)\ne 2$. Suppose that $E(K)$ contains exactly one point of order 2 and two points of order 4.
Then there exist {\sl nonzero} $a,b \in K$ such that $a^2+4b$  is {\sl not} a square in $K$ and $E$ is $K$-isomorphic to
$$\mathcal{E}^{(4)}_{a,b}: y^2=x\left(x^2+(a^2+2b)x+b^2\right).$$

\begin{sect}
\label{Q84}
Recall that $\mathcal{E}^{(4)}_{a,b}(K)$ contains exactly two points of order 4, namely, $Q_{+}=(-b,ab)$ and  $Q_{-}=(-b,-ab)=-Q_{+}$.
It follows  that  the existence of a point of order 8 in $\mathcal{E}^{(4)}_{a,b}(K)$  is equivalent to the divisibility by $2$ of  $Q_{+}$ in
$\mathcal{E}^{(4)}_{a,b}(K)$. By Theorem \ref{divideS} and  Remark \ref{nonvan},    $Q_{+}$ is divisible by 2 if and only if $(-b) -0$ is a square in $K$ and there exists $r \in K$
such that  $r^2=x_0 -\alpha$ and {\sl nonzero} $(2x_0+p)(x_0-\alpha)-2y_0 r$
is a  square in $K$, where
$$\alpha=0, p=a^2+2b; \ x_0=-b, y_0=ab.$$
This means that $r^2=-b \ne 0$ and
$$(-2b+a^2+2b)(-b)-2ab r=a^2 r^2+2 a r^2 r=r^2 (a^2+2 a r)=r^2\left((a+r)^2-r^2\right)$$
is a nonzero square in $K$.
It follows that  $Q_{+}$ is divisible by 2 in  $\mathcal{E}^{(4)}_{a,b}(K)$ if and only if there exist nonzero $r, T \in K$ such that
\begin{equation}
\label{rt}
b=-r^2, \ T^2=(a+r)^2-r^2.
\end{equation}

If this is the case,  then  Theorem \ref{divideS}
gives us two distinct halves of  $Q_{+}$ in $\mathcal{E}^{(4)}_{a,b}(K)$, namely,

\begin{equation}
\label{ab8}
\mathcal{Q}_{r,T}=(x_1,y_1):= \left(-b+rT-\frac{ab}{r}, -ab-\left(r+T\right)\left(rT -\frac{ab}{r}\right)\right)
\end{equation}
$$\mathcal{Q}_{r,-T}=(x_{-1},y_{-1}):=\left(-b-rT-\frac{ab}{r}, -ab-\left(r-T\right)\left(-rT  -\frac{ab}{r}\right)\right);$$
$$2 \mathcal{Q}_{r,T}=2\mathcal{Q}_{r,-T}=Q_{+}, \ \mathcal{Q}_{r,-T}= \mathcal{Q}_{r,T}+W_3.$$
This implies that both $\mathcal{Q}_{r,T}$ and $\mathcal{Q}_{r,-T}$ have order 8.
\end{sect}

\begin{prop}
\label{ord8}
$\mathcal{E}^{(4)}_{a,b}(K)$ contains a point of order $8$ if and only if there exist nonzero $r,t \in K$   such that
$t \ne 0,\pm 1$  and
$$b=-r^2,  \ a=\frac{2t^2r}{1-t^2}.$$
\end{prop}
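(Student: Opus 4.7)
The plan is to reduce the proposition to a rational parametrization of a conic, using the criterion already derived in \ref{Q84}. By that criterion, the existence of a point of order $8$ in $\mathcal{E}^{(4)}_{a,b}(K)$ is equivalent to $Q_{+}=(-b,ab)$ being divisible by $2$, which holds if and only if there exist nonzero $r,T\in K$ with $b=-r^2$ and $T^2=(a+r)^2-r^2$. Rewriting the right-hand side as $a(a+2r)$ makes it apparent that we are really looking at $K$-points on the plane conic $T^2=a(a+2r)$ in the variables $(a,T)$, with $r$ regarded as a fixed nonzero parameter.

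The first step is to observe that this conic carries the obvious $K$-rational point $(a,T)=(-2r,0)$, so it admits a rational parametrization by lines through that point. Setting $T=t(a+2r)$ for a free parameter $t\in K$ and substituting into the conic equation yields $t^2(a+2r)^2=a(a+2r)$; cancelling the factor $a+2r$ produces the linear relation
$$a(1-t^2)=2rt^2,$$
from which one reads off $a=\frac{2rt^2}{1-t^2}$ and consequently $T=\frac{2rt}{1-t^2}$. This is the heart of the proof; the rest is bookkeeping.

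For the forward direction, given $(r,T)$ furnished by \ref{Q84}, I would first verify that $a+2r\ne 0$ (otherwise the conic equation would force $T=0$) and then define $t:=T/(a+2r)$; the linear relation above immediately yields the desired formula for $a$. The forbidden value $t=0$ is excluded because $T\ne 0$, and $t=\pm 1$ is excluded because together with $a(1-t^2)=2rt^2$ it would force $r=0$. For the reverse direction, given $(r,t)$ as in the statement, I would set $T:=\frac{2rt}{1-t^2}$, note that $T\ne 0$, and verify the conic equation $T^2=(a+r)^2-r^2$ by a short algebraic identity using $a+r=\frac{r(1+t^2)}{1-t^2}$; then \ref{Q84} supplies a point of order $8$ in $\mathcal{E}^{(4)}_{a,b}(K)$.

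The computations themselves are entirely routine. The only place where some care is required is the bookkeeping of the excluded parameter values $t\in\{0,\pm 1\}$: one must identify them with the genuinely degenerate situations of the parametrization (namely $T=0$ and the tangent line to the conic at the base point $(-2r,0)$) and confirm that they are exactly the cases incompatible with $r,T\in K\setminus\{0\}$. Beyond that minor subtlety, I do not anticipate any serious obstacle.
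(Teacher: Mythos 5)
Your proof is correct and follows essentially the same route as the paper: both reduce, via the criterion of \ref{Q84}, to rationally parametrizing the conic $T^2=(a+r)^2-r^2$, and your lines-through-$(-2r,0)$ parametrization of $T^2=a(a+2r)$ yields exactly the standard hyperbola parametrization $a/r+1=\frac{1+t^2}{1-t^2}$, $T/r=\frac{2t}{1-t^2}$ that the paper invokes after dividing by $r^2$. Your bookkeeping of the degenerate values $t\in\{0,\pm1\}$ is, if anything, more explicit than the paper's.
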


\begin{proof}
Dividing the second equation in \eqref{rt} by $r^2$, we get
$$\left(\frac{T}{r}\right)^2=\left(\frac{a}{r}+1\right)^2-1.$$
Using the well-known rational parametrization
of the hyperbola, we obtain that this equality is equivalent to the existence of a  $t\in K, t\ne\pm1,$ such that
$$\frac{a}{r}+1=\frac{1+t^2}{1-t^2},\; \frac{T}{r}=\frac{2t}{1- t^2}.$$
This means that
\begin{equation}
\label{au8}
a=\frac{2t^2r}{1-t^2}, \;  T=\frac{2tr}{1-t^2}
\end{equation}
for  $t\in K$, $t\ne\pm1$. Since $T\ne 0$, we have $t\ne0$.
  We still need to find what does it mean in terms of $r,t$ that  $a^2+4b$ is {\sl not} a square in $K$. We have
$$a^2+4b=\left(\frac{2t^2r}{1-t^2}\right)^2+4 (-r^2)=4r^2\frac{2t^2-1}{(1-t^2)^2}.$$
This implies that $a^2+4b$ is not a square in $K$ if and only if $t \ne \pm 1$  and $2t^2-1$ is not a square.
\end{proof}

\begin{thm}
\label{order8}
Let $E$ be an elliptic curve over a field $K$ with $\fchar(K)\ne 2$. Suppose that $E(K)$ contains exactly one point of order $2$.
Then the  following conditions are equivalent.
\begin{itemize}
\item[(i)]
$E(K)$ contains a cyclic subgroup of order $8$.
\item[(ii)]
There exists  $t \in K$ such that
\begin{itemize}
\item[(1)]
$t  \ne0, \pm 1$,
\item[(2)]
 $2t^2-1$ is not a square in $K$,
 \item[(3)]
 $E$ is $K$-isomorphic to the elliptic curve
$$\mathcal{E}^{(8)}_{t}: y^2=x\left(x^2+2\frac{t^4+2t^2-1}{ (t^2-1)^2}x+1\right).$$
\end{itemize}
\end{itemize}
\end{thm}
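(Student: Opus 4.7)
My plan is to leverage Proposition \ref{ord8} as the core computational input and to use the rescaling from Theorem \ref{iso4} to normalize the parameter $r$ away, turning the condition ``there is a cyclic subgroup of order $8$'' into the displayed normal form $\mathcal{E}^{(8)}_t$.

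For the direction (i) $\Rightarrow$ (ii), I would first invoke Theorem \ref{order44}: since a cyclic subgroup of order $8$ contains a point of order $4$, the curve $E$ must be $K$-isomorphic to some $\mathcal{E}^{(4)}_{a,b}$ with nonzero $a,b \in K$ and $a^2+4b$ not a square. By Sect.~\ref{Q84}, the only points of order $4$ in $\mathcal{E}^{(4)}_{a,b}(K)$ are $Q_{\pm}=(-b,\pm ab)$, so the existence of a point of order $8$ is equivalent to $Q_{+}$ being divisible by $2$. Proposition \ref{ord8} then translates this to the existence of nonzero $r \in K$ and $t \in K\setminus\{0,\pm 1\}$ with $2t^2-1$ not a square, such that $b=-r^2$ and $a=2t^2 r/(1-t^2)$. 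Substituting these into $\mathcal{E}^{(4)}_{a,b}$ and computing
\[
a^2+2b=\frac{2r^2(t^4+2t^2-1)}{(t^2-1)^2},\qquad b^2=r^4,
\]
I would then apply the change of variables $\tilde{x}=x/r^2$, $\tilde{y}=y/r^3$ (the isomorphism of Theorem \ref{iso4}) to clear the $r$-dependence and arrive precisely at the equation for $\mathcal{E}^{(8)}_t$.

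For the direction (ii) $\Rightarrow$ (i), I would simply identify $\mathcal{E}^{(8)}_t$ as the particular case of $\mathcal{E}^{(4)}_{a,b}$ with $a=2t^2/(1-t^2)$ and $b=-1$, so that $r=1$ in the parametrization of Proposition \ref{ord8}. A short computation gives
\[
a^2+4b=\frac{4(2t^2-1)}{(1-t^2)^2},
\]
which is a square in $K$ if and only if $2t^2-1$ is; by hypothesis it is not. Thus the hypotheses of Theorem \ref{order44} hold, producing the point of order $4$, and Proposition \ref{ord8} applied with this $r=1$ and the given $t$ makes $Q_{+}$ divisible by $2$, yielding a point of order $8$.

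The main obstacle is purely bookkeeping: verifying the coefficient $2(t^4+2t^2-1)/(t^2-1)^2$ matches after rescaling, and making sure that the choice of sign of $r$ (which flips the sign of $a$ but can be reabsorbed by sending $t\mapsto -t$, under which all conditions on $t$ are invariant) does not produce essentially new curves. Apart from these verifications, every step is an immediate consequence of the framework already set up: the normal form from Section \ref{sec2}, the explicit divisibility criterion of Theorem \ref{divideS}, and the parametrization in Proposition \ref{ord8}.
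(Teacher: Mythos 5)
Your proposal is correct and follows essentially the same route as the paper: both combine Theorem \ref{order44} with Proposition \ref{ord8} and then eliminate $r$ by the rescaling $\tilde{x}=x/r^2$, $\tilde{y}=y/r^3$ (the paper phrases this as a single chain of equivalences rather than two separate implications, but the content is identical). One small quibble: that rescaling is just the standard change of Weierstrass coordinates, not an application of Theorem \ref{iso4}, and the sign ambiguity in $r$ is harmless simply because the equation of $\mathcal{E}^{(4)}_{a,b}$ depends only on $a^2$ and $b^2$.
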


\begin{proof}
Combining Theorem \ref{order44} with Proposition \ref{ord8}, we conclude that the condition (i) is equivalent to the existence
of nonzero $r,t \in K$, $t\ne\pm1$, such that

$$a=\frac{2t^2r}{1-t^2}, b=-r^2$$
enjoy the following properties.
\begin{itemize}
\item[(a)]
$a \ne 0$.
\item[(b)]
$a^2+4b$ is {\sl not} a square in $K$.
\item[(c)]
$E$ is isomorphic over $K$ to the elliptic curve
$$\mathcal{E}^{(4)}_{a,b}: y^2=x\left(x^2+(a^2+2b)x+b^2\right).$$
\end{itemize}
We notice  that $a=0$ if and only if $t=0$; moreover,
$$b^2=(-r^2)^2=r^4,$$
$$a^2+2b=2r^2\frac{(t^4+2t^2-1)}{(t^2-1)^2}.$$
Consequently, the curve $E$ is $K$-isomorphic to the curve
$$\mathcal{E}^{(8)}_{r,t}: y^2=x\left(x^2+2r^2\frac{t^4+2t^2-1}{ (t^2-1)^2}x+r^4\right).$$
 Dividing both sides by $r^6$ and using the change of variables $\tilde x=x/r^2$, $\tilde y=y/r^3 $, we get
 the elliptic curve
 $$\mathcal{E}^{(8)}_{t}: \tilde y^2=\tilde x\left(\tilde x^2+2\frac{t^4+2t^2-1}{ (t^2-1)^2}\tilde x+1\right), $$
(where $t \ne 0, \pm 1$) that is $K$-isomorphic to $E$. It remains to notice that  $a^2+4b$ is {\sl not} a square in $K$ if and only if
 $2t^2-1$ is not a square.
\end{proof}

\begin{ex}
Let us describe explicitly points of order 8 in $\mathcal{E}^{(8)}_{t}(K)$. Notice that
$\mathcal{E}^{(8)}_{t}=\mathcal{E}^{(4)}_{a,b}$ with
$$a=\frac{2t^2}{1-t^2}, \ b=-1, r=1, T=\frac{2t}{1-t^2}.$$
In particular, $\mathcal{E}^{(8)}_{t}(K)$ contains a point
$Q_{+}=(-b,ab)=(1, -\frac{2t^2}{1-t^2})$ of order 4.
Clearly, the conditions \eqref{rt} hold. Then  \eqref{ab8} give us two distinct order 8 points in $\mathcal{E}^{(8)}_{t}(K)$, namely,

$$\mathcal{Q}_{r,T}= \left(1+\frac{2t}{1-t^2}+\frac{2t^2}{1-t^2}, \frac{2t^2}{1-t^2}-\left(1+\frac{2t}{1-t^2}\right)\left(\frac{2t}{1-t^2} +\frac{2t^2}{1-t^2}\right)\right)$$
$$=\left(\frac{t+1}{1-t},\frac{-2t}{(1-t)^2}\right),$$
$$\mathcal{Q}_{r,-T}=\left(1-\frac{2t}{1-t^2}+\frac{2t^2}{1-t^2}, \frac{2t^2}{1-t^2}-\left(1-\frac{2t}{1-t^2}\right)\left(-\frac{2t}{1-t^2} +\frac{2t^2}{1-t^2}\right)\right)$$
$$=\left(\frac{1-t}{1+t},\frac{2t}{(1+t)^2}\right);$$
$$2 \mathcal{Q}_{r,T}=2\mathcal{Q}_{r,-T}=Q_{+}, \ \mathcal{Q}_{r,-T}= \mathcal{Q}_{r,T}+W_3.$$
So, $\mathcal{E}^{(8)}_{t}(K)$ contains exactly four points of order 8, namely,
$$\left(\frac{1+t}{1-t},\frac{-2t}{(1-t)^2}\right), \ \left(\frac{1-t}{1+t},\frac{2t}{(1+t)^2}\right)$$
and their negatives
$$\left(\frac{1+t}{1-t},\frac{2t}{(1-t)^2}\right), \ \left(\frac{1-t}{1+t},\frac{-2t }{(1+t)^2}\right).$$
\end{ex}

\begin{rem}
D. Kubert \cite{Kubert} described another family of elliptic curves
\begin{equation}
\label{Kubert8}
\mathcal{E}_{8,d}:y^2+(1-c)xy-by=x^3-bx^2
\end{equation}
with point $Q=(0,0)$ of  order $8$,
where
$$c=\dfrac{(2d-1)(d-1)}{d},\quad b=(2d-1)(d-1).$$
Completing the square on the left-hand side of the  equation, we get
$$\left(y+\dfrac{(1-c)x-b}{2}\right)^2=x^3-bx^2+\left(\dfrac{(1-c)x-b}{2}\right)^2.$$
The curve given by the latter equation is isomorphic to
$$y^2=x^3+\left(\left(\dfrac{1-c}{2}\right)^2-b\right)x^2-\dfrac{(1-c)b}2x+\dfrac{b^2}4.$$
Since
$$\dfrac{1-c}{2}=\dfrac{-2d^2+4d-1}{2d},$$
we get
$$\begin{aligned}y^2=x^3+\left(\dfrac{-4d^4-4d^3+16d^2-8d+1}{4d^2}\right)x^2\\-
\dfrac{(-2d^2+4d-1)(2d-1)(d-1)}{2d}x+\dfrac{(2d-1)^2(d-1)^2}4.\end{aligned}$$
Multiplying both sides of the above equation by $d^6$ and replacing $yd^3$ by $y$ and $xd^2$ by $x$,
\noindent we get the equation with polynomial coefficients
$$\begin{aligned}y^2=x^3+\left(\dfrac{-4d^4-4d^3+16d^2-8d+1}{4}\right)x^2\\-
\dfrac{(-2d^2+4d-1)(2d-1)(d-1)d^3}{2}x+\dfrac{d^6(2d-1)^2(d-1)^2}4.\end{aligned}$$

The polynomial on the right can be factored as follows:
$$(x-(d^4-d^3))\left(x^2+\dfrac{1-8d(d-1)^2}{4}x-\dfrac{d^3(d-1)(2d-1)^2)}4\right).$$
Now using the change of variables $ x-(d^4-d^3)\to x$, we obtain that
 $$y^2=x\left(x^2+\frac{8d^4-16d^3+16d^2-8d+1}{4}x+d^4(d-1)^4\right).$$
 Dividing both sides by $d^6(d-1)^6$ and using the change of variables  $\tilde x=x/d^2(d-1)^2$, \break $\tilde y=y/d^3(d-1)^3$, we get
 the elliptic curve
 $$\tilde y^2=\tilde x\left(\tilde x^2+\frac{8d^4-16d^3+16d^2-8d+1}{4d^2(d-1)^2}\tilde x+1   \right) .$$
It is easy to verify that
 $$\frac{8d^4-16d^3+16d^2-8d+1}{4d^2(d-1)^2}=2\frac{t^4+2t^2-1}{ (t^2-1)^2},$$
where $t=2d-1$. This implies that $\mathcal{E}_{8,d}$ is isomorphic over $K$ to $\mathcal{E}^{(8)}_{t}$
with $t=2d-1$.
 \end{rem}

\begin{thm}
Let $K$ be a field with $\fchar(K)\ne 2$. Let $s, t\in K$ be  nonzero elements of $K$ such that $s, t\ne   \pm 1$ and $2t^2-1, 2s^2-1$ are not   squares in $K$. If $K$ does not contain a primitive $4$th root of $1$, then the elliptic curves $\mathcal{E}^{(8)}_{s}$ and $\mathcal{E}^{(8)}_{t}$
are $K$-isomorphic if and only if $s=\pm t$ or $s^2+t^2=2s^2t^2$. If $K$ contains a primitive $4$th root of $1$, then
$\mathcal{E}^{(8)}_{s}$ and $\mathcal{E}^{(8)}_{t}$
are $K$-isomorphic if and only if $s=\pm t$, $s^2+t^2=2s^2t^2$, or $s^4t^4+2s^2+2t^2=4s^2t^2+1$.
\begin{proof}
It follows from Theorem \ref{iso4}
that $\mathcal{E}^{(8)}_{s}$ and $\mathcal{E}^{(8)}_{t}$
are $K$-isomorphic if and only if there exists   $u\in K$ such that $u^4=1$ and
$$\frac{s^4+2s^2-1}{ (s^2-1)^2}=u^2\frac{t^4+2t^2-1}{ (t^2-1)^2}.$$
If $K$ does not contain a primitive 4th root of 1, then the latter equality is possible only if
$$\frac{s^4+2s^2-1}{ (s^2-1)^2}=\frac{t^4+2t^2-1}{ (t^2-1)^2},$$
which takes place if and only if one of the equalities  $s=\pm t$ or $s^2+t^2=2s^2t^2$ is valid.
If $K$ contains a primitive 4th root of 1, then we have
$$\frac{s^4+2s^2-1}{ (s^2-1)^2}=\pm\frac{t^4+2t^2-1}{ (t^2-1)^2},$$
which holds if and only if $s=\pm t$, $s^2+t^2=2s^2t^2$, or $s^4t^4+2s^2+2t^2=4s^2t^2+1$.

\end{proof}
\end{thm}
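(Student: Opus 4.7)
The plan is to apply Theorem \ref{iso4} directly. Both curves $\mathcal{E}^{(8)}_s$ and $\mathcal{E}^{(8)}_t$ are presentations of $\mathcal{E}^{(4)}_{a,b}$ with $b=-1$ (as noted in the Example just after Theorem \ref{order8}), namely with $a_t = 2t^2/(1-t^2)$ and similarly $a_s = 2s^2/(1-s^2)$. In particular $b^2 = d^2 = 1$, so the second condition $u^4 d^2 = b^2$ of Theorem \ref{iso4} collapses to $u^4 = 1$, i.e., $u^2 \in \{+1,-1\}$.

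The next step is to observe the source of the dichotomy in the statement: $u^2 = +1$ is always available in $K$ (take $u = \pm 1$), whereas the case $u^2 = -1$ occurs for some $u \in K$ if and only if $K$ contains a primitive $4$th root of unity. This separates the claim into its two cases.

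It then remains to translate the first condition $c^2 + 2d = u^2(a^2 + 2b)$ into an equation in $s$ and $t$ for each sign of $u^2$. Substituting $a_t^2 + 2b = 2(t^4+2t^2-1)/(t^2-1)^2$ and the analogous formula in $s$, clearing denominators, and writing $S = s^2$, $T = t^2$, the condition becomes
\[
(S^2+2S-1)(T-1)^2 \;=\; \pm\,(T^2+2T-1)(S-1)^2,
\]
with the sign matching that of $u^2$. I expect a direct expansion to produce the two factorizations
\[
(S^2+2S-1)(T-1)^2 - (T^2+2T-1)(S-1)^2 \;=\; 2(S-T)(S+T-2ST),
\]
\[
(S^2+2S-1)(T-1)^2 + (T^2+2T-1)(S-1)^2 \;=\; 2\bigl(S^2T^2 - 4ST + 2S + 2T - 1\bigr).
\]
The first identity gives $s = \pm t$ or $s^2 + t^2 = 2s^2 t^2$, which is exactly the first assertion of the theorem. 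The second identity, which is only relevant when $K$ contains $\sqrt{-1}$, gives the extra possibility $s^4 t^4 + 2s^2 + 2t^2 = 4 s^2 t^2 + 1$.

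The main obstacle is purely the bookkeeping needed to verify the two polynomial identities above, but both are routine symmetric calculations in $(S,T)$ with no conceptual difficulty; everything else is a direct application of Theorem \ref{iso4}.
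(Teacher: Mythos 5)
Your proposal is correct and follows essentially the same route as the paper: both reduce via Theorem \ref{iso4} to the existence of $u\in K$ with $u^4=1$ and $\frac{s^4+2s^2-1}{(s^2-1)^2}=u^2\frac{t^4+2t^2-1}{(t^2-1)^2}$, then split according to whether $\sqrt{-1}\in K$. Your two factorization identities (which I have checked) are exactly the computations the paper leaves implicit in its ``which takes place if and only if'' steps, so your write-up is if anything slightly more explicit than the original.
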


\section{Elliptic curves with point of order 6}
Let $K$ be a field with $\fchar{K} \ne 0$. Let $E$ be an elliptic curve over $K$ defined by the equation
$$y^2=f(x),$$
where $f(x) \in K[x]$ is a monic cubic polynomial without repeated roots.

\begin{ex}
\label{exT6}
Let $t\in K \setminus{0,-4, 1/2}$. Let us consider the elliptic curve
$$\mathcal{E}^{(6)}_{t}: y^2=(x^2+(t^2+2t)x+t^2)(x+1)$$
over $K$. (We assume that $t\not\in\{0,-4, 1/2\}$  to  exclude the case when the cubic polynomial has a repeated root.) The group
$E(K)$ contains the point $W_3=(-1,0)$ of order 2 and the point $P=(0,t)$. Let us put
$$y_0=t, r=1, T=t.$$
Then the curve $\mathcal{E}^{(6)}_{t}$ coincides with the elliptic curve \eqref{ellTr} from Theorem \ref{divRed2}.
According to this theorem,
$$\mathcal{Q}_{r,T}=\left(r T-\frac{y_0}{r}, -y_0-(r+T)\left(r T-\frac{y_0}{r}\right)\right)$$
  and
 $$\mathcal{Q}_{r,-T}=\left(-r T-\frac{y_0}{r}, -y_0-(r-T)\left(-r T-\frac{y_0}{r}\right)\right)$$
are distinct points of  $E(K)$ that satisfy
$$2 \mathcal{Q}_{r,T}=2 \mathcal{Q}_{r,-T}=P,   \  \mathcal{Q}_{r,-T}=\mathcal{Q}_{r,T}+W_3 .$$
In our case
$$r T=t,  \frac{y_0}{r}=t, r T-\frac{y_0}{r}=0,  -r T-\frac{y_0}{r}=-2t.$$
This implies that
$$\begin{aligned}\mathcal{Q}_{r,T}=(0,-y_0)=(0,-t)=-P, \  \mathcal{Q}_{r,T}+W_3=\mathcal{Q}_{r,-T}\\=(-2t,-t-(1-t)(-2t))=(-2t, t-2t^2).\end{aligned}$$
Since
$$\mathcal{Q}_{r,T}=-P, \ 2 \mathcal{Q}_{r,T}=-P,$$
$P$ and $\mathcal{Q}_{r,T}$ have order 3 while $\mathcal{Q}_{r,-T}=\mathcal{Q}_{r,T}+W_3$ has order 6.
So, the point $(0,t)\in \mathcal{E}^{(6)}_{t}(K)$ has order 3 and the point $(-2t, t-2t^2)\in \mathcal{E}^{(6)}_{t}(K)$ has order 6.

Notice that $\mathcal{E}^{(6)}_{t}(K)$  contains exactly one point of order 2 if and only if the discriminant  $(t^2+2t)^2-4t^2=t^2(t^2+4t)$  of $x^2+(t^2+2t)x+t^2$ is {\sl not} a square, i.e., if and only if $t^2+4t$ is {\sl not} a square.

\end{ex}

\begin{thm}
\label{family6}
The following conditions on $E$ are equivalent.

\begin{itemize}
\item[(i)]
$E(K)$ contains a point of order $6$.
\item[(ii)]
There exists  $t \in K\setminus \{0,-4,1/2\}$ such that  $E$ is isomorphic over $K$ to the elliptic curve
$$\mathcal{E}^{(6)}_{t}: y^2=(x+1)(x^2+(t^2+2t)x+t^2).$$
\end{itemize}
\end{thm}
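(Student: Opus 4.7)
The plan is to establish both directions. For (ii) $\Rightarrow$ (i), I simply invoke Example \ref{exT6}, which directly exhibits the point $(-2t, t-2t^2) \in \mathcal{E}^{(6)}_t(K)$ of order $6$; the non-singularity of $\mathcal{E}^{(6)}_t$ for $t \notin \{0, -4, 1/2\}$ is elementary. Thus the real content lies in the converse.

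For (i) $\Rightarrow$ (ii), let $R \in E(K)$ have order $6$. Set $W := 3R$ (of order $2$) and $P := 2R$ (of order $3$, so with nonzero $y$-coordinate). Write $E$ as $y^2 = (x-\alpha)(x^2+px+q)$ with $W = (\alpha, 0)$ and $P = (x_0, y_0)$. The central step is to find a $K$-isomorphism normalizing $W$ to $(-1,0)$ and $P$ to a point with $x$-coordinate $0$. This requires the change of variables $x = u^2 x' + x_0$, $y = u^3 y'$ with $u \in K$ satisfying $u^2 = x_0 - \alpha$. The existence of such $u \in K$ is exactly where I invoke Theorem \ref{divRed}: since $P = 2R$ lies in $2E(K)$, the difference $x_0 - \alpha$ must be a square in $K$. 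A direct computation shows the transformed equation has the form $y'^2 = (x'+1)(x'^2 + \tilde{p} x' + \tilde{q})$, and the transformed $P$ is $(0, \tilde{y}_0)$ with $\tilde{y}_0 = y_0/u^3$ satisfying $\tilde{y}_0^2 = \tilde{q}$.

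Next I convert the order-$3$ condition for $(0, \tilde{y}_0)$ into an algebraic equation in $\tilde{p}, \tilde{q}$. The condition $2(0, \tilde{y}_0) = -(0, \tilde{y}_0)$ is equivalent to $x\bigl(2(0, \tilde{y}_0)\bigr) = 0$. Applying Remark \ref{doubleQ} with $\alpha = -1$, $x_1 = 0$, this yields $(\tilde{p} - \tilde{q})^2/(4\tilde{q}) - 1 = 0$, equivalently $(\tilde{p} - \tilde{q})^2 = 4\tilde{q}$. Setting $t := \tilde{y}_0 \neq 0$, we obtain $\tilde{q} = t^2$, and the relation becomes $(\tilde{p} - t^2)^2 = 4 t^2$, so $\tilde{p} = t^2 \pm 2t$. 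Replacing $t$ by $-t$ if necessary (which keeps $\tilde{q} = t^2$ unchanged), we may assume $\tilde{p} = t^2 + 2t$, giving $E \cong_K \mathcal{E}^{(6)}_t$.

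To conclude, I verify that $t \notin \{0, -4, 1/2\}$: we have $t = \tilde{y}_0 \neq 0$ because $P$ has order $3$; while for $t \in \{-4, 1/2\}$ the cubic $(x+1)(x^2 + (t^2+2t)x + t^2)$ acquires a repeated root (at $x=-4$ and $x=-1$, respectively), forcing $\mathcal{E}^{(6)}_t$ to be singular, contrary to $E$ being elliptic. No significant obstacle is expected; the main technical point is the normalization via Theorem \ref{divRed}, and once the curve is placed in the form with $2$-torsion at $(-1,0)$ and $3$-torsion at $x=0$, the doubling formula of Remark \ref{doubleQ} pins down the coefficients almost immediately.
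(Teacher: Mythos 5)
Your proof is correct and follows essentially the same route as the paper's: both directions reduce (ii) $\Rightarrow$ (i) to Example \ref{exT6}, and for (i) $\Rightarrow$ (ii) both normalize $E$ so that the $2$-torsion point sits at $(-1,0)$ and the $3$-torsion point at $x=0$ (using the squareness of $x_0-\alpha$ supplied by the division-by-$2$ criterion), then pin down the remaining coefficients from the relation $2P=-P$. The only cosmetic difference is that the paper reads off $p=T^2+2rT$, $q=(rT)^2$ directly from the explicit half-point formula of Theorem \ref{divRed2} applied to $Q=-P$, whereas you derive the equivalent relation $(\tilde{p}-\tilde{q})^2=4\tilde{q}$ from the duplication formula of Remark \ref{doubleQ} and solve it; both land on $\mathcal{E}^{(6)}_{t}$ with the same exclusions $t\notin\{0,-4,1/2\}$.
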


\begin{proof}
Suppose  $E(K)$ contains a point of order 6. This means that $E(K)$ contains a point of order $2$ and a point of order $3$.
The existence of a point of order 2 in $E(K)$ means that $f(x)$ has a root in $K$ say, $\alpha$ and one may represent $f(x)$ as a product
$$f(x)=g(x)(x-\alpha) \in K[x],$$
where
$$g(x)=x^2+px+q \in K[x]$$
is a monic quadratic polynomial without repeated roots such that our $\alpha\in K$ is {\sl not} a root of $g(x)$.
Then $W_3=(\alpha,0)\in E(K)$ is a point of order 2.

Let $P=(x_0,y_0)\in E(K)$ be a point of order 3.  Since $3 \ne 2$, we have  $y_0 \ne 0$. Using the change of variables $x \to x-x_0$ (and replacing $\alpha$ by $\alpha-x_0$ and $g(x)$ by $g(x-x_0)$), we may and will assume that $x_0=0$,
i.e., $P=(0,y_0)$. Since $P$ has order 3, it lies in $2 E(K)$, because $2 (-P)=P$. Let us apply the last assertion of Theorem \ref{divRed2} to $P=(0,y_0)$ and $Q=-P=(0,-y_0)$. We obtain
that there exist $r\in K$ and a nonzero $T \in K$ such that
$$r^2=-\alpha, \ g(x)=x^2+px+q=x^2+\left(T^2+2 \frac{y_0}{r}\right)x+\left(\frac{y_0}{r}\right)^2,$$
$$(0,-y_0)=Q=\mathcal{Q}_{r,T}=\left(r T-\frac{y_0}{r}, -y_0-(r+T)\left(r T-\frac{y_0}{r}\right)\right).$$
Looking at the $x$-coordinates, we see that
$$r T-\frac{y_0}{r}=0, \ rT=\frac{y_0}{r}, \ y_0=r^2 T,$$
and therefore
$$g(x)=x^2+(T^2+2rT)x+(rT)^2, \ f(x)=(x^2+(T^2+2rT)x+(rT)^2)(x+r^2)$$
and the equation for $E$ is
$$E: y^2=f(x)=(x^2+(T^2+2rT)x+(rT)^2)(x+r^2).$$
Dividing both sides of this equation  by $r^6$, and making a change of variables $\tilde x=x/r^2, \tilde y=y/r^3$, we obtain that $E$
is isomorphic to the elliptic curve
$$\mathcal{E}^{(6)}_{t}:\tilde y^2=(\tilde x+1)(\tilde x^2+(t^2+2t)\tilde x+t^2)$$
with $t=T/r \ne 0$. Since the  polynomial $(\tilde x+1)(\tilde x^2+(t^2+2t)\tilde x+t^2)$ has no multiple roots, we conclude that
 $t\not\in\{-4,0,1/2\}$.

The converse assertion follows from Example \ref{exT6}.
\end{proof}

\begin{rem}
In Theorem \ref{family6} we do {\sl not} assume that $\fchar(K)\ne 3$!
\end{rem}

\begin{ex}
Let $K=\mathbb{F}_3$ be the 3-element field. Then there is exactly one element $t$ in $\mathbb{F}_3\setminus \{0,-4\}$,
namely $t=1$. It follows from Theorem \ref{family6} that $E$ is an elliptic curve over $K$ such that $E(\mathbb{F}_3)$ contains a point of order 6 if and only if
 $E$ is $\mathbb{F}_3$-isomorphic to
 $$\mathcal{E}^{(6)}_{1}: y^2=(x+1)(x^2+1).$$
 Let us consider the curve $\mathcal{E}^{(6)}_{1}$. It contains an $\mathbb{F}_3$-point of order 6, namely,
 $Q=(-2t,  t-2t^2)=(-2,-1)$.
  Hence the whole group $\mathcal{E}^{(6)}_{1}(\mathbb{F}_3)$ has order  divisible by 6.
 On the other hand, by Hasse's bound, the order of $\mathcal{E}^{(6)}_{1}(\mathbb{F}_3)$ does not exceed $3+2 \sqrt{3}+1<6 \cdot 2$.
 This implies that $\mathcal{E}^{(6)}_{1}(\mathbb{F}_3)$ has order  6 and therefore coincides with the cyclic group of order 6 generated by $Q$.
\end{ex}

\begin{rem}
D. Kubert described another family of elliptic curves
$$\mathcal{E}_{6,t}: y^2+(1-c)xy-(c+c^2)y=x^3-(c+c^2)x^2$$
with point $Q=(0,0)$ of  order $6$.
The equation for $\mathcal{E}_{6,c}$ is equivalent to
$$ \begin{aligned} y^2+2y\frac{(1-c)x-(c+c^2)}{2}+\frac{((1-c)x-(c+c^2))^2}{4}\\
=x^3-(c+c^2)x^2+\frac{((1-c)x-(c+c^2))^2}{4}.\end{aligned}$$
The left-hand side is equal to $(y+ ({(1-c)x-(c+c^2)})/{2})^2$ while the right-hand side splits into the product
$$(x-c)\left(x^2+\frac{-3c^2-2c+1}{4}x-\frac{c^3+2c^2+c}{4}\right).$$
After the change of variables $\tilde x =x-(c^2+c)$,\ $\tilde y=y+({(1-c)x-(c+c^2)})/{2}$, we get the equation
$$\tilde y ^2=(\tilde x +c^2)\left(\tilde x ^2+\frac{5c^2+6c+1}{4}\tilde x +\frac{c^2(c+1)^2}{4}\right).$$
Dividing both sides by $c^6$,  we get the equation
$$\left(\frac{\tilde y}{c^3}\right)^2=\left(\frac{\tilde x}{c^2}+1\right)\left(\left(\frac{\tilde x}{c^2}\right)^2+\frac{5c^2+6c+1}{4c^2}\left(\frac{\tilde x}{c^2}\right)+\frac{(c+1)^2}{4c^2}\right).$$
Now the change of variables
 $\bar x=\tilde x /c^2, \ \bar y=\tilde y/c^3$ gives us the equation
$$\bar y^2=(\bar x+1)\left(\bar x^2+\frac{5c^2+6c+1}{4c^2}\bar x+\left(\frac{c+1}{2 c}\right)^2\right),$$
which is nothing else but the equation of $\mathcal{E}^{(6)}_{t}$ with $t=(c+1)/2c$.
\end{rem}

\section{Elliptic curves with point of order 12}

Let $K$ be a field with $\fchar(K)\ne 2$, and let
$t\in K\setminus \{0,-4,1/2\}$ be such that $t^2+4t$ is not a square in $K$. Let us consider the elliptic curve
$$E:=\mathcal{E}_t^{(6)}: y^2=g(x) (x+1),$$
where
$$g(x)=x^2+(t^2+2t)x+t^2$$
is a quadratic irreducible polynomial over $K$. Then $W=W_3=(-1,0)$ is the only point of order 2 in $E(K)$.  We know that $E(K)$ contains a point of order $3$. Hence $E(K)$ contains a point of order 12 if and only if $W$ is divisible by 2 in $E(K)$. This is equivalent to the condition that
$-1-\mathbf{x}$ is a square in the quadratic extension
$$K_g=K[x]/g(x)K[x]$$
of $K$, i.e., there exist $u,v \in K$ such that
$$-1-\mathbf{x}=(u \mathbf{x}+v)^2 \in K_g.$$
Clearly,  such $u \ne 0$.
(Here $\mathbf{x}$ is the image of $x$ in $K_g$.) In other words, $W$ is divisible by 2 in $E(K)$ if and only if there exist  $u,v \in K$ such that $(ux+v)^2 - (-1-x)$ is divisible by $g(x)$. The latter condition means that there exist  $u,v \in K$ such that
$$u^2 x^2+(2uv+1)x +(v^2+1)=(ux+v)^2+x+1$$
coincides with
$$u^2 g(x)=u^2 x^2+u^2 (t^2+2t)x+u^2 t^2,$$
i.e.,
$$2uv+1=u^2 (t^2+2t), \  v^2+1=u^2 t^2.$$
Subtracting one equation from the other, we obtain that
$$v^2-2uv=-2t u^2.$$
Dividing both sides by $u^2$ and putting $\lambda=v/u$, we get
$$\lambda^2-2\lambda=-2t,$$
which gives us
$$t=\frac{2\lambda-\lambda^2}{2}, \ v=\lambda u.$$
This implies that $\lambda \ne 0$, since $t \ne 0$ and therefore $v \ne 0$. We have
$$(\lambda u)^2+1=u^2 \left(\frac{2\lambda-\lambda^2}{2}\right)^2,$$
which means that
$$1=u^2\left( \left(\frac{2\lambda-\lambda^2}{2}\right)^2-\lambda^2\right)=
u^2\lambda^2 \left( \left(\frac{2-\lambda}{2}\right)^2-1\right).$$
Consequently,
$$1=v^2 \left (\left(1-\frac{\lambda}{2}\right)^2-1\right).$$
Putting $s=1-\lambda/2$ and  $\mu=1/v$, we get the equation for a hyperbola in $(s,\mu)$-coordinates.
$$\mu^2=s^2-1.$$
Using the standard parametrization
$$s=\frac{1+T^2}{1-T^2},\quad  \mu=\frac{2T}{1-T^2} $$
of this hyperbola,
we obtain
$$\begin{aligned}\lambda=\frac{4T^2}{T^2-1}, \quad t=\frac{2\lambda-\lambda^2}{2}=-\frac{4T^2(T^2+1)}{(T^2-1)^2}, \\
 t^2+2t=\frac{8T^2(T^2+1)(T^4+4T^2-1)}{(T^2-1)^4},\end{aligned}$$
and the equation of $E$ takes the form
$$y^2=(x+1)\left(x^2+\frac{8T^2(T^2+1)(T^4+4T^2-1)}{(T^2-1)^4}x+\frac{16T^4(T^2+ 1)^2}{(T^2-1)^4}\right).$$
The condition $t\in K\setminus \{0,-4,1/2\}$ is equivalent to $T\not\in\{0,\pm1,\pm\sqrt{-1}\}$, $3T^2-1\neq0$, and
$3T^2+1\neq0$. Since
$$t^2+4t=\frac{16T^2(T^2+1)(3T^2-1)}{(T^2-1)^4},$$
$t^2+4t$ is not a square in $K$ if and only if $(T^2+1)(3T^2-1)$ is not a square in $K$.

We have proved the following statement.
\begin{thm}
\label{family12}
The following conditions on $E$ are equivalent.

\begin{itemize}
\item[(i)]
$E(K)$ contains exactly one point of order $2$ and a point of order $12$.
\item[(ii)]
There exists  $T \in K\setminus\{0,\pm1,\pm\sqrt{-1}\}$ such that  $3T^2+1\neq0$, $(T^2+1)(3T^2-1)$ is not a square, and $E$ is isomorphic over $K$ to the elliptic curve
$$\mathcal{E}^{(12)}_{T}: y^2=(x+1)\left(x^2+\frac{8T^2(T^2+1)(T^4+4T^2-1)}{(T^2-1)^4}x+\frac{16T^4(T^2+ 1)^2}{(T^2-1)^4}\right).$$
\end{itemize}
\end{thm}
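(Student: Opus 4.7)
My plan is to reduce the assertion to applying Theorem~\ref{division} to the unique $2$-torsion point of a curve already brought into the form $\mathcal{E}^{(6)}_t$. A point of order $12$ in $E(K)$ exists precisely when $E(K)$ contains both a point of order $3$ and a point of order $4$. The first together with the existence of a point of order $2$ gives a point of order $6$, so Theorem~\ref{family6} (combined with Remark~\ref{onePT} and the hypothesis of exactly one $2$-torsion point) places $E$ in the form $\mathcal{E}^{(6)}_t: y^2 = (x+1)g(x)$ with $g(x) = x^2 + (t^2+2t)x + t^2$, $t \in K\setminus\{0,-4,1/2\}$, and $t^2+4t$ not a square in $K$, equivalently $g(x)$ irreducible over $K$. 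With this normalization, the existence of a point of order $4$ in $E(K)$ is the same as $W_3 = (-1,0)$ being divisible by $2$ in $E(K)$.

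Next, I invoke Theorem~\ref{division} with $\alpha = -1$ and $x_0 = -1$: divisibility of $W_3$ by $2$ is equivalent to $-1 - \mathbf{x}$ being a square in $K_g = K[x]/g(x)K[x]$. I will write $-1 - \mathbf{x} = (u\mathbf{x}+v)^2$ with $u,v \in K$ and $u \neq 0$, and rephrase as $g(x) \mid (ux+v)^2 + x + 1$ in $K[x]$. Comparing coefficients of the degree-two monic polynomials $(ux+v)^2 + x + 1$ and $u^2 g(x)$ gives the system
\[
2uv + 1 = u^2(t^2 + 2t), \qquad v^2 + 1 = u^2 t^2.
\]
Setting $\lambda = v/u$ and subtracting, I obtain $t = \lambda - \lambda^2/2$; substituting back into $v^2 + 1 = u^2 t^2$ yields the single relation $1 = v^2\bigl((1-\lambda/2)^2 - 1\bigr)$. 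With $s := 1 - \lambda/2$ and $\mu := 1/v$, this is the conic $\mu^2 = s^2 - 1$.

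The standard rational parametrization $s = (1+T^2)/(1-T^2)$, $\mu = 2T/(1-T^2)$ of this hyperbola (for $T \neq \pm 1$) then recovers $\lambda = 4T^2/(T^2-1)$ and
\[
t = -\frac{4T^2(T^2+1)}{(T^2-1)^2}, \qquad t^2 + 2t = \frac{8T^2(T^2+1)(T^4+4T^2-1)}{(T^2-1)^4},
\]
which upon substitution into the equation of $\mathcal{E}^{(6)}_t$ gives exactly the claimed equation of $\mathcal{E}^{(12)}_T$. Both directions of the equivalence then follow: the forward direction from the chain of reductions above, and the converse by observing that $\mathcal{E}^{(12)}_T$ lies in the family $\mathcal{E}^{(6)}_t$ (producing a $3$-torsion point via Example~\ref{exT6}) while the construction supplies the half-point of $W_3$ (a point of order $4$), whose sum has order $12$.

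The main obstacle is not conceptual but bookkeeping: I must translate each exclusion in the $t$-world to one in the $T$-world. Using the identities $t = -4T^2(T^2+1)/(T^2-1)^2$, $t+4 = 4(1-3T^2)/(T^2-1)^2$, and $2t-1 = -(3T^2+1)^2/(T^2-1)^2$, the conditions $t \notin \{0,-4,1/2\}$ become $T \notin \{0,\pm 1,\pm\sqrt{-1}\}$ together with $3T^2 - 1 \neq 0$ and $3T^2 + 1 \neq 0$. Finally, from $t^2 + 4t = 16T^2(T^2+1)(3T^2-1)/(T^2-1)^4$, the non-square condition on $t^2+4t$ translates to the non-square condition on $(T^2+1)(3T^2-1)$, matching the statement precisely. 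I must also check that the sign ambiguity $(u,v) \leftrightarrow (-u,-v)$ in the square-root choice does not restrict the parametrization, which is immediate since $(u\mathbf{x}+v)^2 = (-u\mathbf{x}-v)^2$.
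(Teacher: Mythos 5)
Your proposal is correct and follows essentially the same route as the paper: reduce to the $\mathcal{E}^{(6)}_t$ normal form, characterize divisibility of $W_3=(-1,0)$ by $2$ via Theorem~\ref{division} as $-1-\mathbf{x}$ being a square in $K_g$, solve the resulting coefficient system through the substitutions $\lambda=v/u$, $s=1-\lambda/2$, $\mu=1/v$, and parametrize the conic $\mu^2=s^2-1$; even the translation of the exclusions via $t+4$ and $2t-1$ matches the paper's bookkeeping.
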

\begin{rem}
$\mathcal{E}^{(12)}_{T}(K)$ contains points $(0, \pm4T^2(T^2+1)/(T^2-1)^2)$ of order 3
and exactly two points of order 4.
Applying the explicit formulas from Theorem \ref{divExp} to $(x_0,y_0)=(-1,0)$ and taking into account that $r=x_0-\alpha=0$, $\rr =u\mathbf x+v$,
$$\begin{aligned}\Norm (\rr) &=u^2\Norm (\mathbf x)+uv\Tr (\mathbf x)+v^2=
t^2 u^2-(t^2+2t)uv+v^2,\\
\Tr(\rr)&=u\Tr(\mathbf x)+2v=-(t^2+2t)u+2v,
\end{aligned}
$$ we get
the following two points of order $4$ on $\mathcal{E}^{(12)}_{T}$:
$$
(t^2 u^2-(t^2+2t)uv+v^2, (t^2 u^2-(t^2+2t)uv+v^2)(-(t^2+2t)u+2v)),
$$
$$
(t^2 u^2-(t^2+2t)uv+v^2, -(t^2 u^2-(t^2+2t)uv+v^2)(-(t^2+2t)u+2v)),
$$
where
$$\begin{aligned}t=-\frac{4T^2(T^2+1)}{(T^2-1)^2}, \  t^2+2t=\frac{8T^2(T^2+1)(T^4+4T^2-1)}{(T^2-1)^4}, \ \\v=\frac {1}{\mu}= \frac{1-T^2}{2T}, \ u=\frac {v}{\lambda}=-\frac{(T^2-1)^2}{8T^3}.\end{aligned}$$ Substituting these expressions for $t$, $u$, and $v$ in the above formulas, we obtain the order $4$ points
$$\left(-\frac{3T^2+1}{T^2-1}, -\frac{8 T^3(3T^2+1)}{(T^2-1)^3}\right), \ \left(-\frac{3T^2+1}{T^2-1}, \frac{8 T^3(3T^2+1)}{(T^2-1)^3}\right).$$

\end{rem}
\begin{rem}
In Theorem \ref{family12} we do {\sl not} assume that $\fchar(K)\ne 3$!
\end{rem}
\section{Elliptic curves with rational points of order 10}

Let $E$ be an elliptic curve over a field $K$ with $\fchar(K)\ne 2$. Suppose that $E(K)$ contains exactly one point of order 2 and a point $P$ of order $5$. We may assume that the first coordinate of $P$ is 0.
Since $P$ is divisible by $2$, Theorem \ref{divRed2} and Remark \ref{req1} tell  us that $E$  is $K$-isomorphic to the elliptic curve
\begin{equation}
\label{ellt10}
{E}_{t,{y}_0}:{y}^2=({x}+1)\left({x}^2+\left(t^2+2{y}_0\right){x}+{y}_0^2\right),
\end{equation}
where $t^2+4y_0\neq0$ and $1-t^2-2y_0+y_0^2\neq0$.
We may also assume
 that $P=(0,y_0)$.
Clearly $P$ has order $5$ if and only if there exists a $Q\in E(K)$ such that $2P=-Q$ and $2Q=P$.
Using equation \eqref{doubleX} with $p=t^2+2{y}_0$ and $q={y}_0^2$ and the equality $x(Q)=t-y_0$ obtained in
 Remark \ref{req1}, we can write the equalities $2P=-Q$ and $2Q=P$ in the following equivalent form:
 $$
 \frac{(t^2+2y_0-y_0^2)^2}{4y_0^2}-1=t-y_0.$$
 Multiplying both sides by $y_0^2$ and removing parentheses, we get
 $$t^4+y_0^4+4t^2y_0-4ty_0^2-2t^2y_0^2=0,$$
 $$(t^2-y_0^2)^2+4ty_0(t-y_0)=0.$$
 If $t=y_0$, then by Remark \ref{req1}, we obtain $Q_{1,t}=(0,-y_0)$,  $P=-Q_{1,t}$,  and so $3P=0$, which is impossible. Dividing both sides of the above equation by $t-y_0$, we get
 $$(t-y_0)(t+y_0)^2+4ty_0=0.$$ To obtain a rational parametrization of this equation, let us put $y_0=ut$ with $u\in K\setminus\{0,1\}$. Then
 $$t^3(1-u)(1+u)^2+4t^2u=0,$$
 hence
 $$t=\frac{4u}{(u-1)(u+1)^2}, \ y_0=\frac{4u^2}{(u-1)(u+1)^2}.$$
 Since
 $$
 t^2+2y_0=\frac{16u^2}{(u-1)^2(u+1)^4}+\dfrac{8u^2}{(u-1)(u+1)^2}=\dfrac{8u^2(u^3+u^2-u+1)}
 {(u-1)^2(u+1)^4},
 $$
  equation \eqref{ellt10} transforms to
 $$y^2=(x+1)\left(x^2+\dfrac{8u^2(u^3+u^2-u+1)}
 {(u-1)^2(u+1)^4}x+\frac{16u^4}{(u-1)^2(u+1)^4}\right).$$
 Since
 $$t^2+4y_0=\frac{16u^3(u^2+u-1)}{(u-1)^2(u+1)^4}\;\text{and}\;   1-t^2-2y_0+y_0^2=\frac{(u^2-4u-1)(u-1)
}{(u+1)^3},$$
 the conditions $t^2+4y_0\neq0$ and $1-t^2-2y_0+y_0^2\neq0$ are equivalent to
 $u\neq0, u\neq\pm1$, $u^2+u\neq1$, and  $u^2-4u-1 \neq0$. Moreover, $t^2+4y_0$ is not a square if and only if
 $u(u^2+u-1)$ is not a square.

We have proved the following statement.

\begin{thm}
\label{family10}
The following conditions on $E$ are equivalent.

\begin{itemize}
\item[(i)]
$E(K)$ contains exactly one point of order $2$ and a point of order $10$.
\item[(ii)]
There exists  $u \in K\setminus\{0,\pm1, (-1\pm\sqrt 5)/2, 2\pm\sqrt5\}$ such that  $u(u^2+u-1)$ is not a square and $E$ is isomorphic over $K$ to the elliptic curve
$$\mathcal{E}^{(10)}_{u}: y^2=(x+1)\left(x^2+\dfrac{8u^2(u^3+u^2-u+1)}
 {(u-1)^2(u+1)^4}x+\frac{16u^4}{(u-1)^2(u+1)^4}\right).$$
\end{itemize}
\end{thm}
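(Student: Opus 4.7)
The plan is to follow the template already used for Theorem \ref{family12}: the bulk of the work is the derivation that precedes the theorem statement, and the proof itself should just package that derivation as a two-way implication, filling in the few verifications that the preceding paragraphs left implicit.

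For the direction (i)$\Rightarrow$(ii), I would start from the reduction in the opening paragraph. Assume $E(K)$ has exactly one point of order $2$ and a point of order $10$, so it has a point $P$ of order $5$. Since $5$ is odd, $P\in 2E(K)$ (as $P=2\cdot 3P$), so Theorem \ref{divRed2} and Remark \ref{req1} apply and, after the change of coordinates making $x(P)=0$, I may assume $E={E}_{t,y_0}$ with $P=(0,y_0)$. The condition $\operatorname{ord}(P)=5$ is exactly $2P=-Q$ and $2Q=P$ for a half $Q$ of $P$. Using \eqref{doubleX} with $p=t^2+2y_0$, $q=y_0^2$, together with $x(Q)=t-y_0$ from Remark \ref{req1}, these two conditions collapse to the single equation $(t-y_0)(t+y_0)^2+4ty_0=0$ (after discarding the case $t=y_0$, which forces $3P=0$). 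Substituting $y_0=ut$ rationally parametrises the curve, yielding the formulas for $t,y_0$ and hence for $\mathcal{E}^{(10)}_u$. The exclusion of $u\in\{0,\pm1,(-1\pm\sqrt5)/2,2\pm\sqrt5\}$ is then forced by the nondegeneracy conditions $t^2+4y_0\ne 0$ and $1-t^2-2y_0+y_0^2\ne 0$ already computed, and $u(u^2+u-1)$ being a non-square is equivalent to $t^2+4y_0$ being a non-square, which by Remark \ref{onePT} is what guarantees ``exactly one'' point of order $2$.

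For the converse (ii)$\Rightarrow$(i), given $u$ in the stated set with $u(u^2+u-1)$ not a square, I would first verify directly that $\mathcal{E}^{(10)}_u$ is smooth by checking that the cubic on the right has distinct roots --- the discriminant calculations are the same as in the forward direction since each exceptional value of $u$ corresponds precisely to one of the vanishing conditions on $t^2+4y_0$ or $1-t^2-2y_0+y_0^2$. Next, since $u(u^2+u-1)$ is not a square, the quadratic factor $x^2+\tfrac{8u^2(u^3+u^2-u+1)}{(u-1)^2(u+1)^4}x+\tfrac{16u^4}{(u-1)^2(u+1)^4}$ has no root in $K$ (its discriminant is, up to a square factor, $u(u^2+u-1)$), so $W_3=(-1,0)$ is the unique $K$-point of order $2$. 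Then I would check by direct substitution that $P_0=\bigl(0,\tfrac{4u^2}{(u-1)(u+1)^2}\bigr)$ lies on $\mathcal{E}^{(10)}_u(K)$, and that the very algebraic identity $(t-y_0)(t+y_0)^2+4ty_0=0$ (which holds by construction of the parametrisation) is exactly the condition that $2P_0=-Q$ and $2Q=P_0$ for $Q$ the relevant half. This gives $\operatorname{ord}(P_0)=5$, whence $P_0+W_3\in \mathcal{E}^{(10)}_u(K)$ is a point of order $10$.

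The hard part is essentially bookkeeping rather than any real obstruction: one must confirm that each excluded value of $u$ corresponds to a distinct failure (cubic having a repeated root, $P$ degenerating, or the quadratic factor splitting off a new rational $2$-torsion point), and that the identity for the $5$-torsion relation is really equivalent to $(t-y_0)(t+y_0)^2+4ty_0=0$ and nothing more. Once the dictionary
$$
t^2+4y_0\;\leftrightarrow\;\tfrac{16u^3(u^2+u-1)}{(u-1)^2(u+1)^4},\qquad 1-t^2-2y_0+y_0^2\;\leftrightarrow\;\tfrac{(u^2-4u-1)(u-1)}{(u+1)^3}
$$
is recorded, both implications follow from Theorem \ref{divRed2} and Remark \ref{req1} in exactly the same way as Theorem \ref{family12} follows from its preceding computation. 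The only delicate point to spell out explicitly is that the case $t=y_0$ (equivalently $u=1$) must be discarded separately, since it leads to $3P=0$ rather than $5P=0$; this accounts for $u=\pm 1$ in the exceptional list together with the nonsmoothness constraints.
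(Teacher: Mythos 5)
Your proposal is correct and follows essentially the same route as the paper: the paper's proof of Theorem \ref{family10} is precisely the derivation in the paragraphs preceding it (reduction via Theorem \ref{divRed2} and Remark \ref{req1} to $E_{t,y_0}$, the $5$-torsion condition $(t-y_0)(t+y_0)^2+4ty_0=0$ after discarding $t=y_0$, the substitution $y_0=ut$, and the bookkeeping of the nondegeneracy and non-square conditions). The only difference is that you spell out the converse (ii)$\Rightarrow$(i) by direct verification, whereas the paper treats the whole derivation as a chain of equivalences; this is a matter of presentation, not of substance.
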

\begin{rem}
$\mathcal{E}^{(10)}_{u}(K)$  contains   a point $\left(0,{4u^2}/{(u-1)(u+1)^2}\right)$ of order $5$
and exactly one point $(-1,0)$ of order $2$.
\end{rem}
\begin{rem}In Theorem \ref{family10} we do {\sl not} assume that $\fchar(K)\ne 5$!
\end{rem}
\section{Elliptic curves in characteristic 2}
In this section we assume that $K$ is a field of characteristic 2 and $\bar{K}$ is its algebraic closure. It is known  \cite[Appendix A]{Silverman} that $E$ is {\sl ordinary} (i.e., $E(\bar{K})$ contains a point of order 2) if and only if $j(E)\ne 0$.
Let $E$ be an  elliptic curve over $K$ defined by the equation
$$y^2+xy=x^3+a_2 x^2 +a_6,$$
where
$$a_2, a_6 \in K; \ a_6 \ne 0, \  j(E)=\frac{1}{a_6}\ne 0$$
 As above, $E$ has the only one infinite point $\infty=(0:1:0)$, which is taken as the zero of the group law on $E$.
It is known \cite[Appendix A]{Silverman}  that  $E$ is  ordinary.
In addition, every ordinary elliptic curve $\tilde{E}$ over $K$ is isomorphic to $E$ for suitable
$a_2\in K$ and $a_6=1/j(\tilde{E})$
\cite[Appendix A]{Silverman}, \cite[Sect. 2.8]{Wash}.
If $P=(x_0,y_0) \in E(\bar{K})$, then
\begin{equation}
\label{involution}
-P=(x_0,y_0+x_0) \in E(\bar{K}).
\end{equation}
It follows that
$$W_3=(0,\sqrt{a_6})=\left(0, \frac{1}{\sqrt{j(E)}}\right)\in E(\bar{K})$$
is the only point of order 2 in $E(\bar{K})$.

This implies the following assertion (that may be also extracted from \cite{Kramer}).

\begin{prop}
An    elliptic curve  $E$ over a  field $K$ of characteristic $2$
has a $K$-rational point of order $2$ if and only if $j(E)$ is a nonzero square in $K$.
\end{prop}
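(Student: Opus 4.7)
The plan is to reduce the statement to the explicit Weierstrass form that appears in the setup and then read off the condition directly from the coordinates of the unique $2$-torsion point.

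First, I would invoke the fact cited from \cite{Silverman} and \cite{Wash}, namely that every ordinary elliptic curve $\tilde E$ over $K$ is $K$-isomorphic to one of the form $E: y^2 + xy = x^3 + a_2 x^2 + a_6$ with $a_6 = 1/j(\tilde E) \ne 0$. Since $\tilde E(K)$ and $E(K)$ are isomorphic as abstract groups, $\tilde E(K)$ contains a point of order $2$ if and only if $E(K)$ does. Thus it suffices to prove the claim for curves already written in this distinguished form, with the condition ``$j(E)$ is a nonzero square in $K$'' translated into the condition ``$a_6$ is a square in $K$''.

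Next, I would pin down the unique $2$-torsion point of $E$ over $\bar K$. Using the explicit negation formula \eqref{involution}, a point $P = (x_0, y_0) \in E(\bar K)$ satisfies $-P = P$ if and only if $y_0 = y_0 + x_0$, i.e., $x_0 = 0$; substituting into the defining equation then gives $y_0^2 = a_6$, so $y_0 = \sqrt{a_6}$. This reproduces the statement already made in the setup that $W_3 = (0, \sqrt{a_6})$ is the only point of order $2$ on $E$.

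Finally, $E(K)$ contains a point of order $2$ precisely when $W_3 \in E(K)$, which holds if and only if $\sqrt{a_6} \in K$, i.e., if and only if $a_6$ is a square in $K$. Since $a_6 = 1/j(E)$ is nonzero, being a square is equivalent to $j(E)$ being a nonzero square in $K$ (the map $c \mapsto 1/c$ preserves the property of being a nonzero square). This gives both directions of the equivalence. The proof is essentially bookkeeping: the only substantive inputs are the normal form of an ordinary curve and the explicit description of the $2$-torsion coordinate, both of which are supplied in the setup, so there is no real obstacle.
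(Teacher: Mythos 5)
Your proof is correct and follows the same route as the paper: normalize an ordinary curve to $y^2+xy=x^3+a_2x^2+a_6$ with $a_6=1/j(E)$, use the negation formula to see that the unique $2$-torsion point over $\bar K$ is $(0,\sqrt{a_6})$, and conclude that it is $K$-rational exactly when $a_6$ (equivalently $j(E)$) is a nonzero square in $K$. The only thing worth adding is one sentence disposing of the supersingular case $j(E)=0$, where both sides of the equivalence fail because $E(\bar K)$ then has no point of order $2$ at all; the paper covers this via the ordinary/supersingular dichotomy recorded at the start of the section, which your reduction to the ordinary normal form tacitly presupposes.
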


Our first goal is to find explicitly both halves of $P$. So, let $Q=(x_1,y_1) \in  E(\bar{K})$ with $2 Q=P$.
Clearly, $Q \ne W$, i.e.,
$x_1 \ne 0$,
and therefore
 the tangent line $\mathcal L$ to $E$ at $Q$ is {\sl not} vertical, i.e, may be written in the form
$$y=l x+m; \ l,m \in \bar{K}.$$
(If $Q \in E(K)$, then both $l$ and $m$ lie in $K$.) Since $2Q=P$, the line $\mathcal L$ contains $-P$. Restricting (as usual) the equation of $E$ to $\mathcal L$,
we get the equation
$$g(x)=x^3+a_2 x +a_6+(l x+m)^2+x(lx+m)=0.$$
We know that $x=x_1$ is a multiple root of the monic cubic polynomial $g(x)$ and $x=x_0$ is a root of $g(x)$. This implies that
$$g(x)=(x+x_1)^2(x+x_0).$$
It follows that
$$x^3+(a_2+l^2+l)x^2+ m x+(a_6+m^2)=x^3+x_0 x^2+x_1^2 x+x_1^2 x_0,$$
i.e,
$$(a_2+l^2+l)=x_0, \ m=x_1^2, \ (a_6+m^2)=x_1^2 x_0.$$
This implies that
$$l^2+l=x_0+a_2.$$
Since $-P=(x_0,y_0+x_0)$ lies on $L: y=l x+m$, we obtain
\begin{equation}
\label{DIV2}
m=(y_0+x_0)-l x_0=y_0+(l+1)x_0 ,   \ x_1=\sqrt{m}, \ y_1=l x_1+m=l \sqrt{m}+m.
\end{equation}
If we replace $l$ by $l+1$, then we should replace $m$ by $m+x_0$ and  $x_1$ by $x_1+\sqrt{x_0}$.
We also obtain the following formulas:
\begin{equation}
\label{x1x0}
\sqrt{a_6}+m=x_1 \sqrt{x_0}, \  \sqrt{a_6}+y_0+(l+1)x_0=x_1 \sqrt{x_0}.
\end{equation}

\begin{ex}
\label{order42}
Suppose that $x_0=0$, i.e., $P$ is a point of order 2. Then
$$m=\sqrt{a_6}, \ x_1=\sqrt[4]{a_6}, \ y_1=l \sqrt[4]{a_6}+\sqrt{a_6}.$$
If, in addition, $a_2=0$, then $l=0$ or $1$, and we get two halves
$Q_1=(\sqrt[4]{a_6},\sqrt{a_6})$ and $Q_1=(\sqrt[4]{a_6},\sqrt[4]{a_6}+\sqrt{a_6})$
of $P$. This implies that $Q_1$ and $Q_2$ are (the only) points of order 4.
\end{ex}

The following assertion may be extracted from \cite[Prop. 1.1]{Kramer}.
\begin{thm}
\label{division22}
Suppose that $E(K)$ contains a point of order $2$, i.e., there exists $\beta \in K$ such that $a_6=\beta^2$.
Then a point $P=(x_0,y_0) \in E(K)$ is divisible by $2$ in $E(K)$ if and only if the following conditions hold.
\begin{itemize}
\item[(i)]
$x_0$ is a square in $K$, i.e., there exists $r \in K$ such that $r^2=x_0$.
\item[(ii)]
There exists $l \in K$ such that
$$l^2+l=x_0+a_2.$$
\item[(iii)]
If $x_0=0$, i.e., $P$ is a point of order $2$, then  $a_6$ is a fourth power in $K$.
\end{itemize}
\end{thm}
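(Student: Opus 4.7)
The plan is to leverage the explicit half-formulas \eqref{DIV2} and \eqref{x1x0} derived just above the statement, splitting on whether $x_0 = 0$ or not. In characteristic $2$ every element has at most one square root, which will be used implicitly throughout.

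For the forward direction, I would assume $P = 2Q$ with $Q = (x_1,y_1) \in E(K)$. Since $P \ne \infty$, we have $Q \ne \infty, W_3$, so $x_1 \ne 0$. The slope $l$ of the tangent line to $E$ at $Q$ must lie in $K$ because $Q$ and $-P$ are $K$-points on this line. Then \eqref{DIV2} gives $l^2 + l = x_0 + a_2$, which is exactly (ii). From \eqref{x1x0}, solving yields $\sqrt{x_0} = (m+\beta)/x_1 \in K$, which is (i). When $x_0 = 0$, the same relation forces $m = \beta$ and hence $x_1^2 = m = \beta$, so $a_6 = \beta^2 = x_1^4$, giving (iii).

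For the converse, I would construct $Q$ explicitly from (i)--(iii), treating two cases separately. In the generic case $x_0 \ne 0$, pick $l \in K$ from (ii) and $r \in K^\times$ with $r^2 = x_0$ from (i), then set $m := y_0 + (l+1)x_0$, $x_1 := (m+\beta)/r$, $y_1 := l x_1 + m$, all in $K$. The crux is the identity $m^2 + a_6 = x_0 m$, which should be derived by substituting $l^2 = x_0 + a_2 + l$ (from (ii)) into $m^2 = y_0^2 + (l+1)^2 x_0^2 = y_0^2 + (l^2+1)x_0^2$ and then using $y_0^2 + x_0 y_0 = x_0^3 + a_2 x_0^2 + a_6$ (from $P \in E$). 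This identity immediately yields $x_1^2 = m$, after which a direct computation using $x_1^2 = m$ and $l^2+l = x_0+a_2$ confirms $y_1^2 + x_1 y_1 = x_1^3 + a_2 x_1^2 + a_6$, so $Q := (x_1,y_1) \in E(K)$; the line $y = lx + m$ is tangent to $E$ at $Q$ and passes through $-P$ by construction, giving $2Q = P$.

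In the degenerate case $x_0 = 0$, the equation $y_0^2 = a_6 = \beta^2$ gives $P = (0,\beta) = W_3$. I would use (iii) to pick $\gamma \in K$ with $\gamma^4 = a_6$ (so $\gamma^2 = \beta$) and (ii) to pick $l \in K$ with $l^2 + l = a_2$, then set $(x_1,y_1) := (\gamma,\, l\gamma + \gamma^2)$; a short direct check using these two relations verifies $(x_1,y_1) \in E(K)$, and the tangent-line construction yields $2Q = W_3 = P$. The main obstacle will be carrying out the characteristic-$2$ arithmetic cleanly and pinning down the key identity $m^2 + a_6 = x_0 m$ in the generic case; everything else amounts to mechanical verification of the formulas coming from \eqref{DIV2}.
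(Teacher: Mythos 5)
Your proposal is correct and follows essentially the same route as the paper: the same formulas \eqref{DIV2} and \eqref{x1x0}, the same tangent-line factorization $g(x)=(x+x_1)^2(x+x_0)$, and the same case split on $x_0=0$ versus $x_0\ne 0$. The only (minor) difference is in the converse, where the paper starts from a half $Q\in E(\bar K)$ and shows its coordinates land in $K$, while you build $Q$ explicitly and verify the key identity $m^2+a_6=x_0m$ directly; both amount to the same computation.
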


\begin{proof}
Let $P=(x_0,y_0)$ is a $K$-point on $E$.

Assume that $P\in 2 E(K)$, i.e., there exists  a point $Q=(x_1,y_1)$ on $E$ such that $2Q=P$ and $x_1,y_1 \in K$.
Then in the notation above $l,m \in K$, because the tangent line to $E$ at $K$-point $Q$ is defined over $K$. It
follows that (ii) holds.
If $x_0=0$, then it follows from Example \ref{order42} that conditions (i)-(iii) hold. So, we may assume that $x_0 \ne 0$.
It follows from \eqref{x1x0} that
$\beta+m=x_1 \sqrt{x_0}$. This implies that
$$\sqrt{x_0}=\frac{\beta+m}{x_1}$$
lies in $K$, i.e., (i) holds.

Now assume that conditions (i)-(iii) hold and  $Q=(x_1,y_1) \in E(\bar{K})$ satisfies $2Q=P$. If $x_0=0$, then
the explicit formulas of Example \ref{order42} tell us that $Q \in E(K)$. Suppose that $x_0 \ne 0$. Let $\mathcal L: y=lx+m$
be the equation of the tangent equation to $E$ at $Q$. The condition (ii) implies that $l \in K$. Since $\mathcal L$ contains
the $K$-point $-P$, $m$ also lies in $K$. Now  equation \eqref{x1x0} tells us that
$\beta+m =x_1 r$ with nonzero $r=\sqrt{x_0}\in K$, and therefore
$$x_1=\frac{\beta+m}{r}$$
also lies in $K$. This implies that $y_1=l x_1+m$ also lies in $K$.
\end{proof}

\begin{cor}
\label{gamma4}
An  elliptic curve  over a field $K$ of characteristic $2$ has a $K$-rational point of order $4$ if and only if there exists a nonzero $\gamma\in K$
such that $E$ is
 $K$-isomorphic to the elliptic curve
$$\mathbf{E}_{4,\gamma}: y^2+xy=x^3+\gamma^4.$$
\end{cor}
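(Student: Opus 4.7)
The plan is to deduce the corollary from Theorem \ref{division22} together with a standard admissible change of Weierstrass variables in characteristic $2$.

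For the easy direction ($\Leftarrow$), I would simply apply Example \ref{order42} to $\mathbf{E}_{4,\gamma}$, where $a_2=0$ and $a_6=\gamma^4$. Since $\sqrt[4]{a_6}=\gamma$ and $\sqrt{a_6}=\gamma^2$ both lie in $K$, the example produces two $K$-rational halves of $W_3=(0,\gamma^2)$, namely $(\gamma,\gamma^2)$ and $(\gamma,\gamma+\gamma^2)$, and these are $K$-points of order $4$.

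For the nontrivial direction ($\Rightarrow$), I would begin by writing $E$ in the normal form $y^2+xy=x^3+a_2x^2+a_6$ with $a_2,a_6\in K$ and $a_6\ne 0$, which the opening paragraph of the section guarantees is possible for any ordinary $E$. A $K$-rational point $Q$ of order $4$ satisfies $2Q=W_3$, so $W_3=(0,\sqrt{a_6})\in E(K)$; in particular $a_6$ is a square in $K$, and Theorem \ref{division22} applies to $P=W_3$. From that theorem, condition (iii) furnishes a nonzero $\gamma\in K$ with $a_6=\gamma^4$, and condition (ii) (with $x_0=0$) furnishes $l\in K$ satisfying $l^2+l=a_2$.

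The final step is to eliminate $a_2$ by the substitution $y\mapsto y+lx$. Expanding in characteristic $2$, the equation of $E$ becomes
$$y^2+xy+(l^2+l)x^2=x^3+a_2x^2+a_6,$$
and using $l^2+l=a_2$ together with $a_6=\gamma^4$ this collapses to $y^2+xy=x^3+\gamma^4$, which is exactly $\mathbf{E}_{4,\gamma}$. The one place something substantive is needed is the Artin-Schreier solvability $T^2+T=a_2$, which is where a proof could in principle fail; but this is precisely what Theorem \ref{division22}(ii) supplies under the hypothesis that $W_3$ is divisible by $2$ in $E(K)$, so there is no genuine obstacle and the rest is bookkeeping.
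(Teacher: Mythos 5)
Your proposal is correct and follows essentially the same route as the paper: both directions rest on Example \ref{order42} and Theorem \ref{division22} applied to $P=W_3=(0,\sqrt{a_6})$, extracting $a_6=\gamma^4$ from condition (iii) and $l\in K$ with $l^2+l=a_2$ from condition (ii), and then killing the $a_2x^2$ term via $y\mapsto y+lx$. The paper's proof is the same argument (with $\beta$ in place of $\gamma$), so there is nothing to add.
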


\begin{proof}
The result follows almost immediately from Theorem \ref{division22} combined with Example \ref{order42}.
Indeed, if an elliptic curve $E$ over $K$ has a $K$-point of order $4$, then it has a $K$-point of order $2$
and therefore is ordinary, i.e., is $K$-isomorphic to
$$y^2+xy=x^3+a_2 x^2 +a_6,$$
where
$$a_2, a_6 \in K; \ a_6 \ne 0, \  j(E)=\frac{1}{a_6}\ne 0$$
and $a_6$ is a square in $K$. In addition, $P=(0,\sqrt{a_6})\in E(K)$ is a point of order $2$ that
actually lies in $2E(K)$. It follows from Theorem \ref{division22} that there exist $\beta,l\in K$ such that
$$\beta^4=a_6, \ l^2+l=a_2.$$
One should only notice that the change of variables
$x \mapsto x, y \mapsto y+lx$ establishes a $K$-isomorphism between elliptic curves
$y^2+xy=x^3+a_2 x^2a_6$ and
$$y^2+xy=x^3+(a_2+l^2+l)x^2+a_6=x^3+\beta^4.$$
\end{proof}

\begin{cor}
\label{jE4}
Let $E$ be an elliptic curve over $K$ such that $E(K)$ contains a point of order $4$. Then $j(E)$ is a nonzero fourth power in $K$.

Conversely, if $c$ is a nonzero fourth power in $F$, then there exists an elliptic curve $E$ over $K$ such that $j(E)=c$ and $E(K)$ contains a point of order $4$.
Such an $E$ is unique up to $K$-isomorphism.
\end{cor}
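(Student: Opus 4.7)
The plan is to reduce both directions to Corollary \ref{gamma4}, which already supplies the normal form $\mathbf{E}_{4,\gamma}: y^2+xy = x^3+\gamma^4$ for any elliptic curve over $K$ carrying a $K$-rational point of order $4$. Since this curve has $a_2 = 0$ and $a_6 = \gamma^4 \ne 0$, its $j$-invariant equals $1/a_6 = (1/\gamma)^4$. The forward implication is then immediate: if $E(K)$ contains a point of order $4$, pick an isomorphism $E \cong_K \mathbf{E}_{4,\gamma}$ and read off $j(E) = (1/\gamma)^4$, a nonzero fourth power in $K$.

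For existence, given $c = \delta^4$ with $\delta \in K \setminus \{0\}$, I would take $\gamma = 1/\delta$ and set $E = \mathbf{E}_{4,\gamma}$. The same computation yields $j(E) = c$, and Example \ref{order42} (applied with $a_2 = 0$, $a_6 = \gamma^4$) produces an explicit $K$-rational point of order $4$, namely $(\gamma,\gamma^2)$. So existence reduces to a one-line verification.

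The only step that requires a moment of care is uniqueness. Suppose $E_1$ and $E_2$ both have $j$-invariant $c$ and both contain a $K$-rational point of order $4$. Corollary \ref{gamma4} gives $E_i \cong_K \mathbf{E}_{4,\gamma_i}$ for some nonzero $\gamma_i \in K$, and equating $j$-invariants forces $\gamma_1^4 = \gamma_2^4$. The obstacle here, and indeed the only real content of the argument, is to upgrade this to $\gamma_1 = \gamma_2$ rather than equality up to some ambiguity. I would use that in characteristic $2$ the Frobenius $x \mapsto x^2$ is injective on $K$: the relation $\gamma_1^4 = \gamma_2^4$ gives $(\gamma_1+\gamma_2)^4 = \gamma_1^4 + \gamma_2^4 = 0$, so $\gamma_1 = \gamma_2$. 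Then $\mathbf{E}_{4,\gamma_1}$ and $\mathbf{E}_{4,\gamma_2}$ are literally the same Weierstrass equation, and therefore $E_1 \cong_K E_2$. Every other step in the corollary is pure bookkeeping with the normal form furnished by Corollary \ref{gamma4} and the formula $j = 1/a_6$ recalled at the opening of the section.
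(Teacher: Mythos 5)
Your proof is correct and follows essentially the same route as the paper: reduce to the normal form $\mathbf{E}_{4,\gamma}$ via Corollary \ref{gamma4}, read off $j=1/\gamma^4$, and for uniqueness use that fourth roots are unique in characteristic $2$. The only difference is cosmetic — you spell out the Frobenius-injectivity argument $(\gamma_1+\gamma_2)^4=\gamma_1^4+\gamma_2^4=0$ where the paper simply writes $\gamma=\sqrt[4]{1/c}$.
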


\begin{proof}
Suppose $E(K)$ contains a point of order 4. By Corollary \ref{gamma4}, there exists a nonzero $\gamma \in K$ such that $E$ is $K$-isomorphic to
$$\mathbf{E}_{4,\gamma}: y^2+xy=x^3+\gamma^4.$$
Clearly,
$$j(E)=j(\mathbf{E}_{4,\gamma})=\frac{1}{\gamma^4}=\left(\frac{1}{\gamma}\right)^4.$$
This proves the first assertion.

Conversely, if $\delta\in K$ satisfies $\delta^4=c$, then  we put $\gamma=1/\delta$ and consider the elliptic curve $\mathbf{E}_{4,\gamma}$ over $K$.
Recall that $\mathbf{E}_{4,\gamma}(K)$ contains a point of order 4 and
$$j(\mathbf{E}_{4,\gamma})=\frac{1}{\gamma^4}=\left(\frac{1}{\gamma}\right)^4=\delta^4=c.$$
This proves the second assertion.  In order to prove the uniqueness, let us assume that
 $E$ is an elliptic curve over $K$ such that $j(E)=c$ and $E(K)$ contains a point of order 4. Then, thanks to Corollary \ref{gamma4},
there exists nonzero $\gamma \in K$ such that $E$ is $K$-isomorphic to $\mathcal{E}_{4,\gamma}$. This implies that
$$c=j(E)=j(\mathbf{E}_{4,\gamma})=\frac{1}{\gamma^4}.$$
It follows that $\gamma=\sqrt[4]{1/c}$.
\end{proof}

\begin{thm}
\label{order82}
An  elliptic curve  over a field $K$ of characteristic $2$ has a $K$-rational point of order $8$ if and only if there exists
$ t \in K\setminus \{0,1\}$ such that
$E$ is $K$-isomorphic to the elliptic curve
$$\mathbf{E}_{8,t}: y^2+xy=x^3+\left(\frac{t}{t^2+1}\right)^8.$$
\end{thm}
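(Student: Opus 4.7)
The plan is to combine the order-$4$ classification (Corollary \ref{gamma4}) with the divisibility criterion of Theorem \ref{division22} applied to the (automatically $K$-rational) order-$4$ point of $\mathbf{E}_{4,\gamma}$. First, since any $K$-point of order $8$ doubles to a $K$-point of order $4$, by Corollary \ref{gamma4} we may assume $E=\mathbf{E}_{4,\gamma}: y^2+xy=x^3+\gamma^4$ for some nonzero $\gamma\in K$. With $a_2=0$ and $a_6=\gamma^4$, Example \ref{order42} identifies the two order-$4$ points as $Q_+=(\gamma,\gamma^2)$ and $Q_-=-Q_+=(\gamma,\gamma+\gamma^2)$, so the existence of an order-$8$ point in $E(K)$ is equivalent to $Q_+\in 2E(K)$ (if $R\in E(K)$ satisfies $2R=Q_-$, then $2(-R)=Q_+$).

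Next, Theorem \ref{division22} applied to $P=Q_+$ (with $x_0=\gamma\ne 0$, so its clause (iii) is vacuous) reduces this divisibility to the two coupled conditions that $\gamma=s^2$ for some $s\in K$ and that the Artin--Schreier equation $l^2+l=s^2$ is solvable for $l\in K$. The characteristic-$2$ rewriting $l^2+l=s^2\iff(l+s)^2=l$ forces $l$ automatically to be a square $l=m^2$ with $m:=l+s\in K$, and back-substitution yields $s=m+m^2$; conversely, any $m\in K$ gives a valid pair via $l=m^2$, $s=m+m^2$. Hence the two conditions are jointly equivalent to
$$\gamma=(m+m^2)^2=m^2(1+m)^2$$
for some $m\in K$, where the requirement $\gamma\ne 0$ translates to $m\notin\{0,1\}$.

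Finally, I would rationally parametrize by setting $t:=m/(m+1)$; using $t+1=1/(m+1)$ in characteristic $2$, this is an involution of $K\setminus\{1\}$ that restricts to a bijection $K\setminus\{0,1\}\leftrightarrow K\setminus\{0,1\}$. A short computation gives $m(1+m)=t/(t+1)^2=t/(t^2+1)$, and therefore
$$\gamma^4=[m(1+m)]^8=\left(\frac{t}{t^2+1}\right)^8;$$
since the equation of $\mathbf{E}_{4,\gamma}$ depends on $\gamma$ only through $\gamma^4$, this identifies $E$ with $\mathbf{E}_{8,t}$. The converse direction is obtained by running these steps backward: given $t\in K\setminus\{0,1\}$, set $m=t/(t+1)$, $s=m+m^2$, $l=m^2$, and observe that conditions (i)--(ii) of Theorem \ref{division22} are fulfilled, so $Q_+$ lies in $2\mathbf{E}_{8,t}(K)$ and yields an order-$8$ point. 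The main obstacle I foresee is the characteristic-$2$ manipulation of the Artin--Schreier condition: the identity $l^2+l=s^2\iff(l+s)^2=l$ is what forces $l$ to be a square automatically and thereby produces the clean rational parametrization, without which the analysis would become substantially more cumbersome.
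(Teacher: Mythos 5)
Your proposal is correct and follows essentially the same route as the paper: reduce to $\mathbf{E}_{4,\gamma}$ via Corollary \ref{gamma4}, apply Theorem \ref{division22} to the order-$4$ point $(\gamma,\gamma^2)$ to get the coupled conditions $l^2+l=\gamma=r^2$, and then rationally parametrize that conic to land on $\gamma=\bigl(t/(t^2+1)\bigr)^2$. The only difference is cosmetic: the paper simply quotes the parametrization $l=1/(t^2+1)$, $r=t/(t^2+1)$, whereas you derive it (and verify it is exhaustive) via the Artin--Schreier substitution $m=l+s$, which is a nice self-contained justification of the same formula.
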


\begin{proof}
If an elliptic curve $E$ over $K$ has a $K$-point of order $8$, then it has a $K$-point of order $4$,
and therefore by Corollary \ref{gamma4} is $K$-isomorphic to
$$\mathbf{E}_{4,\gamma}: y^2+xy=x^3+\gamma^4,$$
where $\gamma$ is a nonzero element of $K$. A point $Q$ of order 4 in $E(K)$ has $x$-coordinate
$\gamma \ne 0$. Since $Q$ is divisible by $2$ in $E(K)$, Theorem \ref{division22} tells us that there exist
$l,r \in K$ such that
$$l^2+l=\gamma=r^2.$$
Now the parametrization
$$l=\frac{1}{t^2+1}, \ r= \frac{t}{t^2+1}$$
of the conic
$$l^2+l=r^2$$
gives us the formula
$$\gamma=\left(\frac{t}{t^2+1}\right)^2.$$
\end{proof}

\begin{rem}
\label{jE8}
Let $t,s$ be two nonzero elements of $F$.
Then $$t+\frac{1}{t}=s+\frac{1}{s}$$
 if and only if $s=t$ or $s=t^{-1}$.
 This implies that $\mathbf{E}_{8,t}=\mathbf{E}_{8,s}$ if and only if $s=t^{\pm 1}$. (Hereafter both $s$ and $t$ are different from $1$.)
 On the other hand,
 $$j(\mathbf{E}_{8,t})=\left(t+\frac{1}{t}\right)^{-8}, \  j(\mathbf{E}_{8,}s)=\left(s+\frac{1}{s}\right)^{-8}.$$
 This implies that  $\mathbf{E}_{8,t}$ and $\mathbf{E}_{8,s}$ are isomorphic over $\bar{K}$ if and only if $s=t^{\pm 1}$,
 i.e., if and only if $\mathbf{E}_{8,t}=\mathbf{E}_{8,s}$.
\end{rem}

\begin{rem}
\label{ch82}
Let us find explicitly a point of order 8 in $\mathbf{E}_{8,t}(K)$.
We have the order 4 point
$$P=(x_0,y_0)=(\gamma, \gamma^2) \in \mathbf{E}_{8,t}(K)$$
and the equalities
$$l=\frac{1}{t^2+1}, \ r= \frac{t}{t^2+1},  \ x_0=\gamma=\left(\frac{t}{t^2+1}\right)^2=l^2+l=r^2, \ y_0=\gamma^2=\left(\frac{t}{t^2+1}\right)^4.$$
Let $Q=(x_1,y_1)\in \mathbf{E}_{8,t}(K)$ satisfy $2Q=P$ and $l$ be the slope of the tangent line $y=lx+m$ to $E$ at $Q$. Then
$$(y_0+x_0)-l x_0=m=x_1^2.$$
This implies that
$$(\gamma^2+\gamma)-\frac{\gamma}{t^2+1}=m =x_1^2,$$
i.e.,
$$x_1=(\gamma+\sqrt{\gamma})+\frac{\sqrt{\gamma}}{t+1}=\left(\frac{t}{t^2+1}\right)^2+\frac{t}{t^2+1}+\frac{t}{(t+1)^3}=$$
$$\frac{t^2+t(t^2+1)+t(t+1)  }{(t+1)^4}=\frac{t^3}{(t+1)^4}.$$
We have
$$m=x_1^2=\frac{t^6}{(t+1)^8}, \ y_1=l x_1+m=\frac{1}{t^2+1} \frac{t^3}{(t+1)^4}+\frac{t^6}{(t+1)^8}=\frac{t^3}{(t+1)^6}+\frac{t^6}{(t+1)^8}=$$
$$\frac{t^3(t^2+1)+t^6}{(t+1)^8}.$$
To summarize:
$$Q=\left(\frac{t^3}{(t+1)^4},\frac{t^6+t^5+t^3}{(t+1)^8}\right) \in \mathcal{E}_{8,t}(K)$$
is a point of order 8.
\end{rem}

\begin{cor}
Let $q$ be a power of $2$ and $\mathbb{F}_q$ a finite field that consists of $q$ elements.
Let $N$ be a power of $2$ and $\Sigma(q,N)$ the set of $\mathbb{F}_q$-isomorphism classes of elliptic curves $E$ over $\mathbb{F}_q$
such that $E(\mathbb{F}_q)$ contains a point of order $N$.  Then
$$|\Sigma(q,4)|=q-1, \ |\Sigma(q,8)|=\frac{q}{2}-1.$$
\end{cor}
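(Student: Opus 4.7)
The plan is to parametrize each set by the normal forms already established, identify the resulting equivalence relation on the parameter, and then exploit characteristic-$2$ arithmetic to carry out the orbit count.

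For $|\Sigma(q,4)|$, I would invoke Corollary \ref{gamma4} to represent every class by some $\mathbf{E}_{4,\gamma}$ with $\gamma\in\mathbb{F}_q^*$, and then Corollary \ref{jE4} to conclude that $\mathbf{E}_{4,\gamma}$ and $\mathbf{E}_{4,\delta}$ are $\mathbb{F}_q$-isomorphic if and only if they share a common $j$-invariant, i.e.\ $\gamma^4=\delta^4$. Since $q$ is a power of $2$, the order $q-1$ of $\mathbb{F}_q^*$ is odd, so $\gcd(4,q-1)=1$ and $\gamma\mapsto\gamma^4$ is a bijection on $\mathbb{F}_q^*$. Hence distinct values of $\gamma$ give distinct isomorphism classes, and $|\Sigma(q,4)|=|\mathbb{F}_q^*|=q-1$.

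For $|\Sigma(q,8)|$, Theorem \ref{order82} supplies a representative $\mathbf{E}_{8,t}$ with $t\in\mathbb{F}_q\setminus\{0,1\}$ for every class. Remark \ref{jE8} shows that $\mathbf{E}_{8,t}=\mathbf{E}_{8,s}$ (and that the two are $\bar{\mathbb{F}}_q$-isomorphic) precisely when $s\in\{t,t^{-1}\}$; since $\mathbb{F}_q$-isomorphism of these curves implies $\bar{\mathbb{F}}_q$-isomorphism while literal equality implies $\mathbb{F}_q$-isomorphism, the same criterion governs $\mathbb{F}_q$-isomorphism. Thus $|\Sigma(q,8)|$ equals the number of orbits of the involution $t\mapsto t^{-1}$ acting on $\mathbb{F}_q\setminus\{0,1\}$, a set of size $q-2$. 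A fixed point would satisfy $t^2=1$, which in characteristic $2$ factors as $(t-1)^2=0$ and forces $t=1$, an excluded value. Hence the involution is free, every orbit has size $2$, and the orbit count is $(q-2)/2=q/2-1$.

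Neither half presents any real obstacle: once the characteristic-$2$ identities $\gcd(4,q-1)=1$ and the implication $t^2=1\Rightarrow t=1$ are in hand, the counts are immediate from the classifications already proved. The only point worth stating carefully is that the criterion of Remark \ref{jE8}, formulated there in terms of equality of curves or $\bar{K}$-isomorphism, is automatically the criterion for $\mathbb{F}_q$-isomorphism via the sandwich above.
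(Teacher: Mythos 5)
Your proposal is correct and follows essentially the same route as the paper: normal forms $\mathbf{E}_{4,\gamma}$ and $\mathbf{E}_{8,t}$ from Corollary \ref{gamma4} and Theorem \ref{order82}, the $j$-invariant/uniqueness statements of Corollary \ref{jE4} and Remark \ref{jE8} to identify when two parameters give the same class, and the characteristic-$2$ facts that $q-1$ is odd and $t^2=1$ forces $t=1$ to finish the counts. Your explicit remarks that $\gamma\mapsto\gamma^4$ is a bijection on $\mathbb{F}_q^*$ and that the $\bar{\mathbb{F}}_q$-isomorphism criterion sandwiches the $\mathbb{F}_q$-isomorphism criterion merely make precise steps the paper leaves implicit.
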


\begin{proof}
We have
$j(\mathbf{E}_{4,\gamma})=1/\gamma^4$. Since $\gamma$ may take any nonzero value in $\mathbb{F}_q$,  it follows from Corollary \ref{gamma4} that
$|\Sigma(q,4)|=q-1.$

It follows from Theorem \ref{order82} combined with Remark \ref{jE8} that there is a bijection between $\Sigma(q,8)$ and the set of all unordered pairs
$\{(t,t^{-1})\mid t \in \mathbb{F}_q\setminus \{0,1\}\}$.  Since $t \ne t^{-1}$ for each $t \ne 0,1$, the set of such pairs consists of
  $(q-2)/2$ elements.
\end{proof}

\begin{ex}
Let us take $q=4$ and $K=\mathbb{F}_4$. Then there is exactly one elliptic curve $E$ over $\mathbb{F}_4$ (up to an $\bar{\mathbb{F}}_4$-isomorphism)
such that $E(\mathbb{F}_4)$ contains a point of order 4, namely,
$$\mathbf{E}_{4,\rho}=\mathbf{E}_{4,\rho+1}:y^2+xy=x^3+1,$$
where $\rho \in \mathbb{F}_4\setminus \mathbb{F}_2$ satisfies
$$\rho^2+\rho+1=0, \rho^3=(\rho+1)^3=1,  \rho^{-1}=\rho+1,  \ \rho+\rho^{-1}=1.$$
The group $\mathbf{E}_{4,\rho}(\mathbb{F}_4)$ contains a point $Q$ of order 8, namely (Remark \ref{ch82}),
$$Q=\left(\frac{\rho^3}{(\rho+1)^4},\frac{\rho^6+\rho^5+\rho^3}{(\rho+1)^8}\right)=(\rho, \rho).$$
This implies that  the order of the finite group $\mathbf{E}_{4,\rho}(\mathbb{F}_4)$ is divisible by 8. On the other hand,
the Hasse bound tells us that the order of $\mathbf{E}_{4,\rho}(\mathbb{F}_4)$ does not exceed $4+2\sqrt{4}+1<8\cdot 2$.
This implies that $\mathbf{E}_{4,\rho}(\mathbb{F}_4)$ has order 8 and therefore coincides with its cyclic subgroup of order 8 generated by $Q$.
\end{ex}


\begin{thebibliography}{99}

\bibitem{BZAA} B.M. Bekker, Yu.G. Zarhin, {\sl Division by 2 on elliptic curves}. Algebra and Analiz {\bf 29:4} (2017), 196--239;
 	arXiv:1702.02255 [math.NT].



\bibitem{Cassels} J.W.C. Cassels, {\sl Diophantine equations with special reference to elliptic curves}. J. London Math. Soc. {\bf 41} (1966), 193--291.






 \bibitem{Knapp} A. Knapp, Elliptic Curves. Mathematical Notes {\bf 40},  Princeton University Press, Princeton, NJ, 1992.

\bibitem{Kramer} K. Kramer, {\sl Two-descent for elliptic curves in characteristic 2}. Trans. Amer. Math. Soc. {\bf 232} (1977), 279--295.

\bibitem{Kubert} D.S. Kubert, {\sl Universal bounds on the torsion of elliptic curves}. Proc. London Math. Soc. (3) {\bf 33} (1976), 193--237.











 \bibitem{Schaefer} E. Schaefer, $2$-descent on the Jacobians of hyperelliptic curves.
J. Number Theory {\bf 51} (1995), no. 2, 219--232.

\bibitem{SS} N. Schappacher, R.  Schoof, {\sl Beppo Levi and the arithmetic of elliptic curves}.
Mathematical Intelligencer {\bf 18} (1996), 57--69.

\bibitem{Silver} A. Silverberg, {\sl Explicit Families of Elliptic Curves with Prescribed Mod $N$ representations}, pp. 447--461.
 In: Modular Forms and Fermat's Last Theorem (G. Cornell, J.H. Silverman, G. Stevens, eds.) Springer-Verlag New York Inc., 1997.


\bibitem{Silverman} J.S. Silverman, Arithmetic of Elliptic Curves. Second edition.
Graduate Texts in Mathematics
{\bf 106}, Springer, Dordrecht Heidelberg London New York,  2009.




\bibitem{Tate} J. Tate, {\sl Algebraic Formulas in Arbitrary Characteristic}.  Appendix 1 to:
S. Lang, Elliptic Functions, Second Edition. Springer-Verlag New York Inc., 1987.

\bibitem{Wash} L.C.  Washington, Elliptic Curves: Number Theory and Cryptography. Second edition.  Chapman \& Hall/CRC Press, Boca Raton London New York, 2008.




\end{thebibliography}
\end{document}